\numberwithin{equation}{section}
\theoremstyle{plain}
\theoremstyle{remark}
\def \1{{\bf 1}}
\def\C{{\mathbb{C}}}
\def\Q{{\mathbb{Q}}}
\def\Z{{\mathbb{Z}}}
\def\N{{\mathbb{N}}}
\theoremstyle{definition}
\newtheorem{lemma}{Lemma}[section]
\newtheorem{theorem}[lemma]{Theorem}
\newtheorem{corollary}[lemma]{Corollary}
\newtheorem{proposition}[lemma]{Proposition}
\newtheorem{definition}[lemma]{Definition}
\newtheorem{remark}[lemma]{Remark}
\title{Orbifold theory for  vertex  algebras and Galois correspondence}
\author{Chongying Dong \footnote{supported by the Simons foundation  634104 }}
\affil{Department of Mathematics, University of
	California, Santa Cruz, CA 95064 USA}
\author{Li Ren \footnote{partially supported by NSFC grant 12071314}}
\affil{School of Mathematics,  Sichuan University,
	Chengdu 610064 China}
\author{Chao Yang}
\affil{ School of Mathematics,  Sichuan University,
	Chengdu 610064 China }
\begin{document}
\maketitle
\abstract

Let $V$ be a  simple vertex algebra of  countable dimension, $G$ be a finite automorphism group of $V$ and  $\sigma$ be a central element of $G$.
Assume that $\cal S$ is a finite set of inequivalent irreducible $\sigma$-twisted $V$-modules such that  $\cal S$ is invariant under the action of $G$.  Then there is a finite dimensional semisimple associative algebra ${\cal A}_{\alpha}(G,{\cal S})$
for a suitable $2$-cocycle $\alpha$ naturally determined by the $G$-action on ${\cal S}$ such that $({\cal A}_{\alpha}(G,{\cal S}),V^G)$ form a dual pair on the sum $\cal M$ of
$\sigma$-twisted $V$-modules in ${\cal S}$ in the sense that  (1) the actions of ${\cal A}_{\alpha}(G,{\cal S})$ and $V^G$ on $\cal M$ commute,
(2) each irreducible ${\cal A}_{\alpha}(G,{\cal S})$-module appears in $\cal M,$ (3) the multiplicity space of each irreducible ${\cal A}_{\alpha}(G,{\cal S})$-module is an irreducible $V^G$-module, (4) the multiplicitiy spaces of different irreducible ${\cal A}_{\alpha}(G,{\cal S})$-modules are inequivalent $V^G$-modules.
As applications, every irreducible $V$-module is a direct sum of finitely many  irreducible $V^G$-modules and irreducible $V^G$-modules appearing in different $G$-orbits  are inequivalent.  This result generalizes many previous results. We also establish a bijection between subgroups of $G$ and subalgebras of $V$ containing $V^G.$

\section{Introduction}

In this paper we study the orbifold theory for a simple vertex algebra $V$ of countable dimension with a finite automorphism group $G.$ The main goal is to understand $V^G$-modules. Since any $\sigma$-twisted $V$-module $M$ restricts to a $V^G$-module for $\sigma\in G$, the first task is to look for irreducible $V^G$ modules from $M.$ In the present paper we solve this problem if $\sigma$ is a central element of $G.$ More precisely,
for any central element $\sigma$ in $G$ and
any finite set ${\cal S}$ of inequivalent irreducible $\sigma$-twisted modules such that ${\cal S}$ is invariant under the action of $G,$ we establish
a duality result of Schur-Weyl type  for $V^G$ and a semisimple associative algebra $A_{\alpha}(G,{\cal S})$ on the sum of irreducible modules in ${\cal S}.$ In particular, any $\sigma$-twisted module is a direct sum of finitely many irreducible $V^G$-modules.
We also obtain a quantum Galois correspondence for the action of $G$ on $V.$

A systematic study of orbifold theory for a vertex operator algebra initiated in \cite{DM1} and duality result for $(G, V^G)$ on $V$ was established when $G$ is a solvable group. This duality result was extended to any compact Lie group $G$ in \cite{DLM0}.  A Galois correspondence between subgroups of $G$ and vertex operator subalgebras of $V$ containing $V^G$ was derived  in \cite{DM1, HMT, DM2} based on the duality result. The duality result was further extended in \cite{DY} to $\sigma$-twisted modules with $\sigma$ is a central element.  As pointed out in \cite{DY} that  it is better to consider a finite set of inequivalent
irreducible $\sigma$-twisted modules which is G-stable instead of one single module. This
general setting is necessary  for determining the irreducible $V^G$-modules in an irreducible
$\sigma$-twisted $V$-module and identifying irreducible $V^G$-modules from different twisted modules.

Motivated by the theory of associative algebras $A_{g,n}(V)$ for automorphism $g$ of finite order and $n\in \Q$ in \cite{DLM2, DLM3}, an associative algebra $A_{G,n}(V)$ for any finite automorphsim group $G$ and $n\in \Q$ was introduced and studied in \cite{MT}. This leads to
investigating  any finite set of inequivalent irreducible twisted modules which are invariant under the action of $G$ simultaneously. Similar results were also obtained in \cite{DYa, YY} for vertex operator superalgebras.

 If $V$ is rational and $C_2$-cofinite, the results on vertex operator algebra orbifold theory are more fruitful. In this case one can define quantum dimension for a twisted $V$-module and give a full Galois theory using the quantum dimension \cite{DJX}. If $V^G$ is also rational and
 $C_2$-cofinite, then every irreducible $V^G$-module appears in an irreducible $g$-twisted $V$-module \cite{DRX}. These results also hold for vertex operator superalgebas \cite{DRY}.
Also see \cite{CM} on how the rationality and $C_2$-cofiniteness  of $V$ imply the same properties for $V^G$ if $G$ is solvable, and \cite{DNR} on connection between the $V^G$-module category and minimal modular extensions.

All mentioned results on vertex operator algebra orbifold theory heavily depend on the fact that for a vertex operator algebra, an admissible $g$-twisted module  is $\Z_{\geq 0}$-graded
(some kind of highest weight module in the classical sense). Various associative algebras were defined and studied  \cite{Z, DLM1, DLM2, DLM3, MT} for vertex operator algebra to deal with admissible twisted  $V$-modules.  So the techniques and tools for studying vertex operator algebra orbifold theory are not valid for a vertex algebra which itself is not even graded. This explains why investigation of general orbifold theory for vertex algebra is very limited.

But recent study  on Whittaker modules and weak modules  \cite{ALPY1, ALPY2, HY, T} for a vertex operator superalgebra $V$ suggests some results in \cite{DY} on admissible modules above hold for  weak $V$-modules. Our theorems in this paper indicates that all major results in \cite{DY} are true when replacing $V$ by a vertex algebra and admissible modules by any $V$-module (for a vertex algebra we do not have notion of admissible modules). The Schur-Weyl type duality is always the main idea in  studying orbifold theory. The new idea
without using $A_{g,n}(V)$ in this paper is an introduction of generalized Jacobson's density theorem in vertex algebra setting (see Theorem \ref{Dual-Th}). The results  extended  to vertex superalgebra will appear in a separate paper.

Unfortunately, we cannot extend the duality results in \cite{MT} from vertex operator algebra to vertex algebra as we do not know how to deal with all the twisted modules simultaneously  without associative algebras $A_{G,n}(V)$ available. However, we believe that the duality results in \cite{MT} hold for vertex algebra.

This paper is organized as follows. In Section 2, we review basics of vertex algebras.
In section 3, we review some facts on associative algebras, emphasizing the Jacobson's density theorem and its applications to vertex algebras.
In section 4, we introduce dual pairs  in the  vertex algebra setting and prove  key Theorem  {Dual-Th} which connects the Jacobson's density theorem with duality.
In section 5, we prove that  $(\C[G], V^G)$ form a dual pair on $V$.
In section 6, we prove that $(\C^{\alpha_M}[G], V^G)$ form a dual pair on any irreducible $\sigma$-twisted $V$-module $M$ which is $G$-stable where $\sigma$ is a central element of $G.$
In section 7, we prove a most general result that $({\cal A}_{\alpha}(G,{\cal S}) , V^G)$ form a dual pair on $\cal M$
where $\cal M$ is a sum of inequivalent irreducible $\sigma$-twisted modules which are invariant under the action of $G.$

In this paper, we work over the complex field $\C$. For simplicity, $\otimes$ means $\otimes_{\C}.$

\section{Basics}

In this section we review the basics of vertex  algebras (see \cite{B,LL}).

\begin{definition}
A vertex algebra $(V, Y(\ , z) , {\bf 1} )$  is a vector space equipped with
a linear map:
\begin{align*}
		& V \to (\mbox{End}\,V)[[z,z^{-1}]] ,\\
		& v\mapsto Y(v,z)=\sum_{n\in{\Z}}v_nz^{-n-1}\ \ \ \  (v_n\in
		\mbox{End}\,V)\nonumber
\end{align*}
and with a distinguished vector ${\bf 1} \in V$ , satisfying the following conditions: for $u,v\in V$ and $m,n\in \Z$,
\begin{align*} \label{0a4}
		& u_nv=0\ \ \ \ \ {\rm for}\ \  n\ \ {\rm sufficiently\ large};  \\
		& Y({\bf 1},z)=Id_{V};  \\
		& Y(v,z){\bf 1}\in V[[z]]\ \ \ {\rm and}\ \ \
        \lim_{z\to 0}Y(v,z){\bf 1}=v;\\
	\end{align*}
and the Jacobi identity holds:
		\begin{align*}
			& \displaystyle{z^{-1}_0\delta\left(\frac{z_1-z_2}{z_0}\right)
				Y(u,z_1)Y(v,z_2)-z^{-1}_0\delta\left(\frac{z_2-z_1}{-z_0}\right)
				Y(v,z_2)Y(u,z_1)}\\
			& \displaystyle{=z_2^{-1}\delta
				\left(\frac{z_1-z_0}{z_2}\right)
				Y(Y(u,z_0)v,z_2)}
		\end{align*}

\end{definition}

\begin{definition}
An invertible linear transformation $g$ of vertex  algebra  $V$ is called an automorphism if $g\1=\1$ and
$gY(u,z)g^{-1}=Y(gu,z)$ for all $u\in V$. We denote by $\text{Aut}(V)$ the set of all automorphisms of $V$.\\
If  $G \leq \text{Aut}(V),$ then the fixed points
$$V^G=\{v \in V ~| ~ gv=v, g \in G \}$$
is a vertex subalgebra of $V.$
\end{definition}

Let $g$ be an automorphism of $V$ of order $T<\infty.$
For $r=0, \cdots, T-1$,
let $$ V^r=\{v\in V \ | \ gv=e^{-2\pi ir/T}v\}$$
be the eigenspace of $g$ with eigenvalue $e^{-2\pi ir/T}$.
Then $V=\oplus_{r=0}^{T-1}V^r.$

\begin{definition}
A $g$-twisted $V$-module $M$ is a vector space equipped with a linear map:
\begin{align*}
	Y_{M}: ~ & V\to\left(\text{End}M\right)\{z^{1/T}, z^{-1/T}\}\\
	& v\mapsto Y_{M}\left(v,z\right)=\sum_{n\in\frac{1}{T}\Z}v_{n}z^{-n-1}\ \left(v_{n}\in\mbox{End}M\right)
	\end{align*}
satisfying  the following for $0\le r\le T-1$, $u\in V^{r}$,
$v\in V$, $w\in M:$
\[
Y_{M}(u,z)=\sum_{n\in-\frac{r}{T}+\Z}u_nz^{-n-1},
\]

\[
u_{l}w=0\ for\ l\gg0,
\]

\[
Y_{M}\left(\mathbf{1},z\right)=Id_{M},
\]

\begin{align*}
z_{0}^{-1}\text{\ensuremath{\delta}}&\left(\frac{z_{1}-z_{2}}{z_{0}}\right)Y_{M}\left(u,z_{1}\right)Y_{M}\left(v,z_{2}\right)-z_{0}^{-1}\delta\left(\frac{z_{2}-z_{1}}{-z_{0}}\right)Y_{M}\left(v,z_{2}\right)Y_{M}\left(u,z_{1}\right)\\
&=z_{2}^{-1}\left(\frac{z_{1}-z_{0}}{z_{2}}\right)^{-r/T}\delta\left(\frac{z_{1}-z_{0}}{z_{2}}\right)Y_{M}\left(Y\left(u,z_{0}\right)v,z_{2}\right).
\end{align*}
\end{definition}

We remark that if $V$ is a vertex operator algebra, the $g$-twisted module in this definition  is called a weak $g$-twisted $V$-module \cite{DLM1}.  For vertex operator algebra we also have the notions of admissible $g$-twisted, ordinary $g$-twisted $V$-modules. But we do not have these notions for vertex algebra as vertex algebra itself does not have a gradation.

If $g=1$ we have the notion of $V$-module for vertex algebra.

The following weak associativity which is useful later follows from the definition (see also \cite{Li2}).

\begin{lemma} (Weak associativity)
Let $M$ be a $g$-twisted $V$-module. Let $u \in V^r$ for some $r$ and $w \in W$.
Let $l$ be a nonnegative integer  such that $u_{n+\frac{r}{T}}w=0$ for any $n\geq l$.
Then we have
$$ (z_0+z_2)^{^{l+\frac{r}{T}}}Y_M(u,z_0+z_2)Y_M(v,z_2)w=(z_2+z_0)^{l+\frac{r}{T}}Y_M(Y(u, z_0)v,z_2)w. $$
\end{lemma}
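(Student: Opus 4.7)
The plan is to derive the weak associativity as a formal consequence of the twisted Jacobi identity by applying the operator $\Res_{z_1}\,z_1^{l+r/T}(\cdot)$ to both sides, evaluated on $w$. The hypothesis $u_{n+r/T}w=0$ for all $n\ge l$ translates into $u_m w=0$ for every mode $m\ge l+r/T$, so the lowest power of $z_1$ in $Y_M(u,z_1)w$ is $z_1^{-l-r/T}$. Consequently $z_1^{l+r/T}Y_M(u,z_1)w$ is a formal series in non-negative integer powers of $z_1$, which is the key fact that makes the whole computation go through.

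The commutator term $\Res_{z_1}\,z_1^{l+r/T}\,z_0^{-1}\delta\bigl(\tfrac{z_2-z_1}{-z_0}\bigr)Y_M(v,z_2)Y_M(u,z_1)w$ vanishes, because by the standard expansion convention $z_0^{-1}\delta\bigl(\tfrac{z_2-z_1}{-z_0}\bigr)$ is itself a series in non-negative integer powers of $z_1$; multiplied by $z_1^{l+r/T}Y_M(u,z_1)w$, the product carries no $z_1^{-1}$ coefficient. The remaining left-hand term is handled by the usual substitution identity $\Res_{z_1}\,f(z_1)\,z_0^{-1}\delta\bigl(\tfrac{z_1-z_2}{z_0}\bigr)=f(z_0+z_2)$, with $(z_0+z_2)^k$ expanded in non-negative integer powers of $z_2$, which immediately yields $(z_0+z_2)^{l+r/T}Y_M(u,z_0+z_2)Y_M(v,z_2)w$ as desired.

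For the right-hand side I would expand $z_2^{-1}\bigl(\tfrac{z_1-z_0}{z_2}\bigr)^{-r/T}\delta\bigl(\tfrac{z_1-z_0}{z_2}\bigr)=\sum_{m\in-r/T+\Z}z_2^{-m-1}(z_1-z_0)^m$, with each $(z_1-z_0)^m$ expanded in non-negative powers of $z_0$, and compute $\Res_{z_1}\,z_1^{l+r/T}(z_1-z_0)^m$ term by term via the binomial theorem. The main (and essentially only) obstacle is the bookkeeping with fractional exponents: parametrizing $m=-r/T+k$, only $k\ge-l-1$ contributes a residue, and the generalized upper-negation identity $(-1)^j\binom{l+r/T}{j}=\binom{-(l+r/T)-1+j}{j}$ repackages the resulting $z_0$-series into the expansion of $(z_2+z_0)^{l+r/T}$ in non-negative powers of $z_0$, producing $(z_2+z_0)^{l+r/T}Y_M(Y(u,z_0)v,z_2)w$ and closing the argument.
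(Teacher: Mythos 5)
Your proof is correct, and it is precisely the standard derivation (apply $\mathrm{Res}_{z_1}\,z_1^{l+r/T}$ to the twisted Jacobi identity evaluated on $w$, kill the commutator term by the truncation hypothesis, and match the right-hand side via upper negation of the binomial) that the paper itself does not write out but invokes with ``follows from the definition (see also [Li2]).'' All three steps, including the binomial bookkeeping with the fractional exponent, check out.
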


\begin{lemma} \label{ouov}
Let $M$ be a $g$-twisted $V$-module. Let $X$ be a finite dimensional subspace of $M$.
Let $u^1, \cdots , u^k \in V$ and $n_1, \cdots , n_k \in \frac{1}{T}\Z$.
Then there exist $a^1, \cdots , a^t \in V$ and $m_1, \cdots, m_t \in \frac{1}{T}\Z$ such that
$$u^1_{n_1} \cdots u^k_{n_k}(w)=a^1_{m_1}w+ \cdots + a^t_{m_t}w $$
for any $ w \in X $.
\end{lemma}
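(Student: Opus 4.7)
The plan is to proceed by induction on $k$. The base case $k=1$ is immediate (take $t = 1$, $a^{1} = u^{1}$, $m_{1} = n_{1}$), and for the inductive step it suffices to handle the case $k=2$: given that case, I apply the inductive hypothesis to rewrite $u^{2}_{n_{2}}\cdots u^{k}_{n_{k}} w = \sum_{j} b^{j}_{p_{j}} w$ uniformly on $X$, and then apply the $k=2$ result to each $u^{1}_{n_{1}}(b^{j}_{p_{j}} w)$.

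For the $k=2$ case, let $u \in V^{r}$. Using finite-dimensionality of $X$ and the truncation axiom applied to a basis of $X$, I fix uniform nonnegative integers $l, l_{v}$ with $u_{p} w = 0$ for all $w \in X$ and $p \geq l + r/T$, and $v_{q} w = 0$ for all $w \in X$ and $q \geq l_{v}$. If $n \geq l + r/T$ then $u_{n} w = 0$, and the twisted commutator formula
\[
u_{n} v_{m} w = v_{m} u_{n} w + \sum_{i \geq 0}\binom{n}{i}(u_{i} v)_{n+m-i} w
\]
(a finite sum since $u_{i} v = 0$ for $i \gg 0$ in $V$) immediately displays $u_{n} v_{m} w$ as a finite sum of single-mode actions on $w$. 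Assume hereafter that $n < l + r/T$, and set $K := l + r/T - n - 1 \in \Z_{\geq 0}$. Apply weak associativity
\[
(z_{0}+z_{2})^{l+r/T} Y_{M}(u, z_{0}+z_{2}) Y_{M}(v, z_{2}) w = (z_{2}+z_{0})^{l+r/T} Y_{M}(Y(u, z_{0}) v, z_{2}) w,
\]
and equate the coefficients of $z_{0}^{A} z_{2}^{B}$ with $A + B = l + r/T - n - m - 2$, which yields, for each admissible $A$, an identity
\[
\sum_{n'} \binom{l+r/T-n'-1}{A}\, u_{n'}\, v_{n+m-n'}\, w \;=\; \sum_{i, \ell} \gamma^{A}_{i, \ell}\, (u_{i} v)_{\ell}\, w,
\]
in which the left-hand sum is finite (the binomial support forces $n' \leq l + r/T - 1 - A$ and the $v$-truncation forces $n + m - n' < l_{v}$), and the right-hand side is a finite combination of single-mode actions on $w$ with coefficients $\gamma^{A}_{i, \ell}$ depending only on $u, v, n, m, l, l_{v}$, not on $w$.

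Writing $A_{j} := K + j$ and $U_{j} := u_{n-j} v_{m+j} w$ for $j = 0, 1, 2, \ldots$, only finitely many $U_{j}$ can be nonzero, since $U_{j} = 0$ once $m + j \geq l_{v}$. The coefficient of $U_{j'}$ in the equation at $A = A_{j}$ is $\binom{K+j'}{K+j}$, which equals $1$ when $j' = j$ and vanishes when $0 \leq j' < j$, so the resulting system in the $U_{j}$'s is upper triangular with unit diagonal. Back-substitution beginning from the largest $j$ with $U_{j} \neq 0$ expresses $U_{0} = u_{n} v_{m} w$ as a finite linear combination of single-mode terms $(u_{i} v)_{\ell} w$, with coefficients independent of $w \in X$, which is the desired uniform expression. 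The principal technical subtlety is the fractional exponent $l + r/T$ in the twisted setting; this is accommodated by the observation that $l + r/T - n - 1 \in \Z$ whenever $n \in -r/T + \Z$, so the integer binomial identities $\binom{K}{K} = 1$ and $\binom{K+j'}{K+j} = 0$ for $0 \leq j' < j$ remain valid and drive the triangular solution.
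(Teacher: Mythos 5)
Your proof is correct, and it rests on the same two pillars as the paper's: the reduction to the case of a product of two modes, and weak associativity combined with truncation constants $l$, $l_v$ chosen uniformly over the finite-dimensional space $X$. Where you diverge is in how the single coefficient $u_n v_m w$ is extracted from the weak associativity identity. The paper writes $u_{p+r/T}v_{q+s/T}w$ as an iterated residue against a delta function, substitutes $z_1\mapsto z_0+z_2$, and factors $(z_0+z_2)^{p+r/T}=(z_0+z_2)^{p-l}(z_0+z_2)^{l+r/T}$ so that expanding $(z_0+z_2)^{p-l}$ truncates at $i\le m$ and weak associativity applies to the remaining factor; this yields a closed-form double sum $\sum_{i,j}\binom{p-l}{i}\binom{l+r/T}{j}(u_{p-l-i+j}v)_{\ast}w$ in one pass. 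You instead equate coefficients of $z_0^A z_2^B$ along the antidiagonal $A+B=l+r/T-n-m-2$ and observe that the resulting system in $U_j=u_{n-j}v_{m+j}w$ is unitriangular (using $\binom{K+j'}{K+j}=0$ for $0\le j'<j$ and the integrality of $K=l+r/T-n-1$), then back-substitute; the separate commutator-formula treatment of the case $n\ge l+r/T$ is needed precisely because $K<0$ there. Your route avoids delta-function calculus at the cost of not producing an explicit formula, and it is equally valid. One small point of hygiene: rather than ``beginning from the largest $j$ with $U_j\neq 0$'' (which a priori depends on $w$), start the back-substitution at the uniform bound $j_{\max}=l_v-m-1$ coming from your choice of $l_v$; this makes it transparent that the resulting coefficients of the terms $(u_iv)_\ell$ are independent of $w\in X$, which is the whole point of the lemma.
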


\begin{proof}
It is enough to prove that for $u \in V^r, v \in V^s$ and $p,q \in \Z$,
there exist $a^1, \cdots , a^t \in V$ and $m_1, \cdots, m_t \in \frac{r+s}{T}\Z$ such that
$$u_{p+\frac{r}{T}}v_{q+\frac{s}{T}}w=a^1_{m_1}w+ \cdots + a^t_{m_t}w $$ for any $w \in X$.

Since $X$ is finite dimensional, we can take a nonnegative integer $l$ such that $u_{n+\frac{r}{T}}w=0$ for any $n\geq l$ and any $w \in X$.
Consequently, by weak associativity, for any $w \in X$, we have
\begin{align} \label{equ2-1}
(z_0+z_2)^{^{l+\frac{r}{T}}}Y_M(u,z_0+z_2)Y_M(v,z_2)w=(z_2+z_0)^{l+\frac{r}{T}}Y_M(Y(u, z_0)v,z_2)w.
\end{align}
We also note that there exists a nonnegative integer $m$ such that
\begin{align*}
\text{Res}_z z^{p+\frac{s}{T}+n}Y_M(v,z)w=0 ,
\end{align*}
for any  $n>m$ and any  $w \in X$. Hence, for any $i>m$, we have
\begin{align} \label{equ2-2}
\text{Res}_{z_2}
z_0^{p-l-i}z_2^{q+\frac{s}{T}+i}[(z_0+z_2)^{l+\frac{r}{T}}Y_M(u, z_0+z_2)Y_M(v, z_2)w]=0.
\end{align}
For any $w \in X$,  combining (\ref{equ2-1}) and (\ref{equ2-2}), we obtian
\begin{align*}
& u_{p+\frac{r}{T}}v_{q+\frac{s}{T}}w \\
&= \text{Res}_{z_1} \text{Res}_{z_2}z_1^{p+\frac{r}{T}}z_2^{q+\frac{s}{T}}Y_M(u, z_1)Y_M(v, z_2)w\\
& =\text{Res}_{z_0} \text{Res}_{z_1} \text{Res}_{z_2}
z_0^{-1}\delta(\frac{z_1-z_2}{z_0})z_1^{p+\frac{r}{T}}z_2^{q+\frac{s}{T}}Y_M(u, z_1)Y_M(v, z_2)w\\
&=\text{Res}_{z_0} \text{Res}_{z_1} \text{Res}_{z_2}
z_1^{-1}\delta(\frac{z_0+z_2}{z_1})z_1^{p+\frac{r}{T}}z_2^{q+\frac{s}{T}}Y_M(u, z_1)Y_M(v, z_2)w\\
&=\text{Res}_{z_0} \text{Res}_{z_1} \text{Res}_{z_2}
z_1^{-1}\delta(\frac{z_0+z_2}{z_1})(z_0+z_2)^{p+\frac{r}{T}}z_2^{q+\frac{s}{T}}Y_M(u, z_0+z_2)Y_M(v, z_2)w\\
&=\text{Res}_{z_0} \text{Res}_{z_2} (z_0+z_2)^{p+\frac{r}{T}}z_2^{q+\frac{s}{T}}Y_M(u, z_0+z_2)Y_M(v, z_2)w\\
&=\text{Res}_{z_0} \text{Res}_{z_2} (z_0+z_2)^{p-l}z_2^{q+\frac{s}{T}}[(z_0+z_2)^{l+\frac{r}{T}}Y_M(u, z_0+z_2)Y_M(v, z_2)w]\\
&=\text{Res}_{z_0} \text{Res}_{z_2}\sum_{i=0}^m \binom{p-l}{i}
z_0^{p-l-i}z_2^{q+\frac{s}{T}+i}[(z_0+z_2)^{l+\frac{r}{T}}Y_M(u, z_0+z_2)Y_M(v, z_2)w]\\
&=\text{Res}_{z_0} \text{Res}_{z_2}\sum_{i=0}^m \binom{p-l}{i}
z_0^{p-l-i}z_2^{q+\frac{s}{T}+i}[(z_2+z_0)^{l+\frac{r}{T}}Y_M(Y(u, z_0)v,z_2)w]\\
&=\sum_{i=0}^m \sum_{j=0}^\infty \binom{p-l}{i} \binom{l+\frac{r}{T}}{j}(u_{p-l-i+j}v)_{q+\frac{s}{T} + \frac{r}{T}+l+i-j}w.
\end{align*}

If $j$ is large enough, then $u_{p-l-i+j}v=0$ for any $0\leq i \leq m$.
Consequently, the last equation above is a finite sum. Now, the proof is complete.
\end{proof}

\begin{remark}
If $M$ is a $V$-module and $\text{dim}X=1$, Lemma \ref{ouov} was given previously in \cite{LL}.
\end{remark}

For a vertex algebra $V$, let $\mathcal{D}$ be the endomorphism of $V$ defined by
$\mathcal{D}(v)=v_{-2}\1 $  for $v \in V$.  Assume that $g$ is  an automorphism of $V$
of  order $T$. Let $t$ be an indeterminate. Recall from \cite{B,DLM1} that $\mathbb{C}[t^{\frac{1}{T}},t^{-\frac{1}{T}}]$
has the structure of vertex algebra with vertex operator defined by
\[
Y(f(t),z)h(t)=f(t+z)h(t)=(e^{z\frac{d}{dt}}f(t))h(t).
\]
Then the tensor product $\mathcal{L}(V)= V \otimes \mathbb{C}[t^{\frac{1}{T}},t^{-\frac{1}{T}}]$ is also a vertex algebra.
The action of $g$ naturally extends to an automorphism of the tensor
product vertex algebra as follows:
\[
g(u \otimes t^{m})=e^{-2\pi im}(gu \otimes t^{m})
\]
for $u \in V$ and $m \in \frac{1}{T}\Z$.
Denote the subspace of $g$-invariant by $\mathcal{L}(V, g)$, which
is a vertex subalgebra of $\mathcal{L}(V)$.  Moreover, it is clear that
\[
\mathcal{L}(V, g) = \oplus_{r=0}^{T-1} V^{r} \otimes t^{r/T}\mathbb{C}[t,t^{-1}].
\]
Let
\[
V[g]=\mathcal{L}(V,g)/\mathcal{D}\mathcal{L}(V,g).
\]
We will use $v(n)$ to denote the image of $v \otimes t^n$ in $V[g]$ for $v \in V$ and $n \in \frac{1}{T}\Z.$
It is well known that $V[g]$ is a Lie algebra with Lie bracket
\[
[u(n),v(m)]=\sum_{i \geq 0}(u_iv)(m+n-i)
\]
for $u, v \in V$ and $m , n  \in \frac{1}{T}\Z$.

Let $\mathcal{U}(V[g])$ be the universal enveloping algebra of the Lie algebra $V[g]$.
We note that if $M$ is a $g$-twisted $V$-module, then $M$ becomes a $V[g]$-module
such that $u(m)$ acts as $u_{m}$ (see \cite{DLM1}).
Naturally, $M$ is  a $\mathcal{U}(V[g])$-module. If $g=1$, we use  $\mathcal{U}(V)$ to denote $\mathcal{U}(V[g])$.

The following results are obvious.

\begin{lemma} \label{UV}
Let $V$ be a vertex algebra and let $M$ and $N$ be $g$-twisted $V$-modules. Then

\begin{enumerate}[{(1)}]

\item M is a simple $g$-twisted $V$-module if and only if $M$ is a simple $\mathcal{U}(V[g])$-module.

\item $M$ and $N$ are isomorphic $g$-twisted $V$-modules if and only if $M$ and $N$ are isomorphic $\mathcal{U}(V[g])$-modules.

\item (Schur's lemma) If $V$ is  of countable dimensional and $M$ is simple,
then $$\text{End}_{V}(M) \cong \text{End}_{\mathcal{U}(V[g])}(M) \cong \C.$$

\end{enumerate}

\end{lemma}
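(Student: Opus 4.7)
The plan is to rely on the direct dictionary: on a $g$-twisted $V$-module $M$, the generator $u(n)$ of the Lie algebra $V[g]$ acts as the mode $u_n$ for all $u\in V$ and $n \in \frac{1}{T}\mathbb{Z}$, and this extends to a representation of $\mathcal{U}(V[g])$. This correspondence is already noted before the statement of the lemma. All three parts reduce to standard module-theoretic consequences of this dictionary.

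For (1), a subspace $W\subseteq M$ is a $g$-twisted $V$-submodule if and only if it is stable under every mode $v_n$, if and only if it is stable under every $v(n)$, if and only if it is a $\mathcal{U}(V[g])$-submodule. Hence the two lattices of submodules coincide, and simplicity transfers in both directions. For (2), a linear map $\phi\colon M\to N$ intertwines the actions of every $v_n$ if and only if it intertwines the actions of every $v(n)$, so $\phi$ is a $g$-twisted $V$-module morphism if and only if it is a $\mathcal{U}(V[g])$-module morphism. In particular $\phi$ is invertible in one category precisely when it is in the other.

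For (3), the isomorphism $\operatorname{End}_V(M)\cong \operatorname{End}_{\mathcal{U}(V[g])}(M)$ is immediate from (2) applied with $N=M$. For the second isomorphism, fix a nonzero $w\in M$; since $M$ is simple, $M=\mathcal{U}(V[g])\cdot w$ by (1). Because $V$ is of countable dimension, so is $\mathcal{L}(V,g)$, hence so is $V[g]=\mathcal{L}(V,g)/\mathcal{D}\mathcal{L}(V,g)$ and therefore also $\mathcal{U}(V[g])$. Consequently $\dim_{\mathbb{C}} M$ is countable. The evaluation map $\operatorname{End}_{\mathcal{U}(V[g])}(M)\to M$, $\phi\mapsto \phi(w)$, is injective by simplicity, so $\operatorname{End}_{\mathcal{U}(V[g])}(M)$ is of countable dimension; it is a division algebra over $\mathbb{C}$ by Schur. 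The conclusion then follows from the Dixmier lemma: a division algebra over $\mathbb{C}$ of at most countable dimension must equal $\mathbb{C}$, since otherwise any non-scalar element $\phi$ would give the uncountable linearly independent family $\{(\phi - \lambda\cdot\mathrm{Id})^{-1}\}_{\lambda\in\mathbb{C}}$.

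Parts (1) and (2) are essentially tautologies from the definition of $V[g]$ and its action on $M$ via modes, and the Jacobi identity for twisted modules provides the Lie bracket relations needed for $\mathcal{U}(V[g])$ to act. The only step requiring any real content is the Dixmier/Schur argument in (3), where the countable-dimensionality hypothesis is essential; without it, one has no control on the size of $\operatorname{End}_{\mathcal{U}(V[g])}(M)$ as a division algebra over $\mathbb{C}$.
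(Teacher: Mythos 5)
Your proposal is correct and follows the only natural route; the paper itself offers no proof, simply declaring these results ``obvious,'' and your argument (the mode dictionary $u(n)\mapsto u_n$ for (1) and (2), plus the Dixmier/Schur countability argument for (3)) is exactly the standard justification the authors have in mind. No gaps.
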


\section{Associative algebras}

In this section, we review some facts on associative algebras, which are useful later.

\begin{lemma} \label{Lang} \cite[XVII, Theorem 3.2]{Lang}
Let $R$ be a ring and $M$ a semisimple $R$-module. Let $R' = \text{End}_R(M)$.
Let $f \in \text{End}_{R'}(M)$. Let $x_1, \cdots, x_n \in M.$
Then there exists an element $a \in R$ such that $f(x_i)=ax_i$ for $i=1, \cdots, n$.
\end{lemma}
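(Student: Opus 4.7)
The plan is to follow the standard proof of the Jacobson density theorem. The main idea is to bootstrap from $M$ to the direct sum $M^n$ and use the semisimplicity of $M^n$ to extract an element of $R$ that realizes $f$ on the finite tuple $(x_1,\ldots,x_n)$.

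First, I would form $M^n = M \oplus \cdots \oplus M$ ($n$ copies), which is again a semisimple $R$-module because any finite direct sum of semisimples is semisimple. I would then identify $\text{End}_R(M^n)$ with the matrix algebra $M_n(R')$ in the usual way: a matrix $(a_{ij})$ with $a_{ij} \in R' = \text{End}_R(M)$ acts on a column vector $(y_1,\ldots,y_n)^T$ by $(\sum_j a_{ij}(y_j))_i$. Next, define $\tilde f : M^n \to M^n$ to be the diagonal extension, $\tilde f(y_1,\ldots,y_n) = (f(y_1),\ldots,f(y_n))$. The key observation is that $\tilde f$ lies in $\text{End}_{\text{End}_R(M^n)}(M^n)$: for any matrix $A = (a_{ij}) \in M_n(R')$, commuting $\tilde f$ past $A$ reduces to checking $f \circ a_{ij} = a_{ij} \circ f$ for each entry, which is exactly the hypothesis that $f$ commutes with every element of $R'$.

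Now consider the cyclic $R$-submodule $N = R \cdot (x_1,\ldots,x_n) \subseteq M^n$. Because $M^n$ is semisimple, $N$ admits an $R$-module complement $P$, so $M^n = N \oplus P$, and the associated projection $\pi : M^n \to M^n$ onto $N$ along $P$ is an element of $\text{End}_R(M^n)$. Since $\tilde f$ commutes with $\pi$, the submodule $N = \pi(M^n)$ is $\tilde f$-stable. In particular $\tilde f(x_1,\ldots,x_n) \in N$, which means there exists $a \in R$ with $(f(x_1),\ldots,f(x_n)) = a \cdot (x_1,\ldots,x_n) = (ax_1,\ldots,ax_n)$, yielding $f(x_i)=ax_i$ for all $i$.

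The only genuinely non-trivial step is verifying that $\tilde f$ is bicommutant-linear on $M^n$, i.e.\ that it commutes with the full matrix algebra $M_n(R')$ and not merely with its diagonal subalgebra. I would expect this to be the main point to get right, because it is where the hypothesis $f \in \text{End}_{R'}(M)$ is actually used, and it is what converts the bicommutant condition on a single module $M$ into one on the tuple module $M^n$. Everything else is a routine application of semisimplicity and the existence of $R$-linear projections.
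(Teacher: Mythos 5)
Your argument is correct and is essentially the standard proof of the Jacobson density theorem given in the cited reference (Lang, XVII, Theorem 3.2); the paper itself states this lemma without proof, quoting Lang. The passage from $M$ to $M^n$, the identification $\mathrm{End}_R(M^n)\cong M_n(R')$, the verification that the diagonal extension $\tilde f$ commutes with the full matrix algebra, and the projection onto a complement of $R\cdot(x_1,\dots,x_n)$ are exactly the steps of that standard proof.
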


The following well-known Jacobson's density theorem
is an immediate consequence of Lemma \ref{Lang}.


\begin{corollary} \label{Jac1}
(Jacobson's density theorem I) Let $A$ be an associative algebra over $\C$.
Let $M$ be an  simple $A$-module of countable dimension.
Then for any finite dimensional subspace of $X$ and any $f\in \text{Hom}_\C(X, M)$,
there exists an element $a \in A$ such that $f(x)=ax$ for any $x \in X$.
\end{corollary}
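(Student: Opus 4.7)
The plan is to deduce this from Lemma \ref{Lang} applied with $R = A$, once we identify $R' = \mathrm{End}_A(M)$ well enough to recognize every $\mathbb{C}$-linear endomorphism of $M$ as lying in $\mathrm{End}_{R'}(M)$. A simple module is semisimple, so the hypothesis of Lemma \ref{Lang} is satisfied. The central input I need is a version of Schur's lemma that works for countable-dimensional modules: since $M$ is simple of countable dimension over the uncountable algebraically closed field $\mathbb{C}$, a Dixmier-style argument gives $\mathrm{End}_A(M) \cong \mathbb{C}$. (Concretely, any $\varphi \in \mathrm{End}_A(M)$ is, by simplicity, either zero or invertible; if $\varphi$ had no eigenvalue, then $\{(\varphi - \lambda)^{-1} : \lambda \in \mathbb{C}\}$ would be an uncountable $\mathbb{C}$-linearly independent family inside $\mathrm{End}_A(M)$, which is countable-dimensional since $M$ is.) Therefore $R' = \mathbb{C}$ and $\mathrm{End}_{R'}(M) = \mathrm{End}_{\mathbb{C}}(M)$, i.e. every $\mathbb{C}$-linear operator on $M$ commutes with $R'$.

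Given the hypotheses, I would pick a basis $x_1, \dots, x_n$ of the finite-dimensional subspace $X$ and define a $\mathbb{C}$-linear extension $\widetilde{f} \in \mathrm{End}_{\mathbb{C}}(M)$ of $f$ by prescribing arbitrary images for a basis of a complementary subspace (e.g.\ sending it to zero). By the previous paragraph $\widetilde{f} \in \mathrm{End}_{R'}(M)$, so Lemma \ref{Lang} produces $a \in A$ with $a x_i = \widetilde{f}(x_i) = f(x_i)$ for $i = 1, \dots, n$. Since both $x \mapsto ax$ and $f$ are $\mathbb{C}$-linear and agree on a basis of $X$, they agree on all of $X$, which is the desired conclusion.

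The only delicate point in this argument is the countable-dimensional Schur lemma used to identify $R'$ with $\mathbb{C}$; without it one would be stuck with a potentially larger division ring $R'$ and the extension of $f$ to all of $M$ would not automatically commute with $R'$. Everything else — semisimplicity of simple modules, extending a map off a finite-dimensional subspace, reading off equality of two $\mathbb{C}$-linear maps on a basis — is routine. Once that Schur step is in place, the corollary is genuinely an immediate specialization of Lemma \ref{Lang}, exactly as the text claims.
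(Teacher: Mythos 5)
Your proof is correct and is exactly the intended route: the paper offers no written proof beyond calling the corollary an immediate consequence of Lemma \ref{Lang}, and your argument supplies precisely the missing details. In particular you correctly identify the one nontrivial ingredient --- the Dixmier-style Schur lemma showing $\mathrm{End}_A(M)\cong\C$ for a simple module of countable dimension over $\C$ (which is also where the paper's countable-dimension hypothesis is actually used, cf.\ Lemma \ref{UV}(3)) --- after which extending $f$ to all of $M$ and invoking Lemma \ref{Lang} is routine.
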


The following  Jacobson's density theorem,  which is a more general version of Corollary \ref{Jac1},  will be used in Section 7.

\begin{corollary} \label{Jac2}
(Jacobson's density theorem II) Let $A$ be an associative algebra over $\C$. Let $M^1, \cdots, M^n$ be inequivalent  simple $A$-modules of countable dimensions.
 Let $X_i $ be finite dimensional subspace of $M^i$ for $i=0, \cdots, n$.
 Then for any $$f\in \text{Hom}_\C(X_1, M^1) \bigoplus \cdots \bigoplus \text{Hom}_\C(X_n, M^n),$$
there exists an element $a \in A$ such that $f(x)=ax$ for any $x \in X_i$ and $i=0, \cdots, 1$.
\end{corollary}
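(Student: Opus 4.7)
The plan is to reduce Corollary \ref{Jac2} to Lemma \ref{Lang} applied to the semisimple $A$-module $M := M^1 \oplus \cdots \oplus M^n$. First I would identify the commutant $R' := \text{End}_A(M)$. Since each $M^i$ is simple of countable dimension over the algebraically closed field $\C$, the standard (Dixmier--Quillen) version of Schur's lemma gives $\text{End}_A(M^i) \cong \C$; since the $M^i$ are moreover pairwise inequivalent, $\text{Hom}_A(M^i, M^j) = 0$ for $i \neq j$. Consequently $R' \cong \C^n$, spanned by the projection idempotents $e_1, \ldots, e_n$ onto the summands, and $M$ is semisimple as an $A$-module.

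Next I would determine $\text{End}_{R'}(M)$. A $\C$-linear endomorphism of $M$ commutes with every $e_i$ if and only if it preserves each $M^i$, so $\text{End}_{R'}(M) = \bigoplus_{i=1}^{n} \text{End}_\C(M^i)$ acting block-diagonally. Given $f = (f_1, \ldots, f_n)$ with $f_i \in \text{Hom}_\C(X_i, M^i)$, I would extend each $f_i$ arbitrarily (for instance by zero on a vector-space complement of $X_i$ in $M^i$) to some $\tilde f_i \in \text{End}_\C(M^i)$ and form $\tilde f := \tilde f_1 \oplus \cdots \oplus \tilde f_n$, which lies in $\text{End}_{R'}(M)$ by the previous sentence.

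Finally, choosing a finite basis $\{x_{i,j}\}_{i,j}$ of $X_1 \oplus \cdots \oplus X_n \subset M$ and applying Lemma \ref{Lang} to these finitely many vectors produces $a \in A$ with $a x_{i,j} = \tilde f(x_{i,j}) = f_i(x_{i,j})$ for every $i$ and $j$. By $\C$-linearity this yields $ax = f_i(x)$ for all $x \in X_i$ and all $i$, which is precisely the assertion. The only substantive step is the identification of $R'$ with the diagonal algebra $\C^n$: once pairwise inequivalence forces the commutant to be diagonal, the block-diagonal description of $\text{End}_{R'}(M)$ is automatic and the rest is a direct invocation of the density lemma, so I do not anticipate any real obstacle beyond this packaging.
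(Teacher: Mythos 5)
Your proposal is correct and follows essentially the same route as the paper: compute the commutant $\text{End}_A(M^1\oplus\cdots\oplus M^n)\cong\C^n$ via the countable-dimension Schur's lemma and pairwise inequivalence, identify the bicommutant as the block-diagonal algebra $\bigoplus_i\text{End}_\C(M^i)$, extend $f$ to an element of it, and invoke Lemma \ref{Lang}. Your write-up just makes explicit a few steps the paper leaves as ``easy to verify.''
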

\begin{proof}
Let $A'=\text{End}_A(M^1 \bigoplus \cdots \bigoplus M^n)$.
Since $M^1, \cdots, M^n$ are inequivalent  simple $A$-modules of countable dimensions,
we see that
$$A' \cong \text{End}_A(M^1) \bigoplus \cdots \bigoplus \text{End}_A(M^n) \cong \C^n.$$
It is easy to verify that
$$\text{End}_{A'}(M^1 \bigoplus \cdots \bigoplus M^n)= \text{End}_{\C}(M^1) \bigoplus \cdots  \bigoplus \text{End}_{\C}( M^n).$$
Note that any  $f_i\in  \text{Hom}_\C(X_i, M^i)$ can be extended to a linear map
$\widetilde{f_i}\in  \text{End}_\C(M^i).$ So
there exists an element
$$\widetilde{f} \in \text{End}_\C(M_1) \bigoplus \cdots \bigoplus \text{End}_\C(M_n) \cong \text{End}_{A'}(M^1 \bigoplus \cdots \bigoplus M^n)$$
such that $\widetilde{f}=f$ on $X_i$ for $i=1, \cdots, n$.
Now, the result follows from Lemma \ref{Lang}.
\end{proof}

\begin{lemma} \label{not0}
Let $V$ be a simple vertex algebra of countable dimension and let $M$ be a $g$-twisted $V$-module.
If $v^1, \cdots, v^s \in V$ are linearly independent and $w^1, \cdots , w^s \in M$ are not all zero,
then $$Y_M(v^1,z)w^1+ \cdots +Y_M(v^s,z)w^s \neq 0.$$
\end{lemma}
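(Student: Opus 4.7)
The plan is to argue by contradiction and exploit Jacobson's density theorem for $V$ viewed as a module over its own universal enveloping algebra $\mathcal{U}(V)$. Assume that $\sum_{i=1}^{s}Y_M(v^i,z)w^i=0$ with the $v^i$ linearly independent and the $w^i$ not all zero; after relabelling we may suppose $w^s\neq 0$. The goal is to manipulate this identity into one in which $v^s$ has been replaced by the vacuum $\mathbf{1}$ while the other $v^i$ have been replaced by $0$, at which point the defining property $Y_M(\mathbf{1},z)=\mathrm{id}_M$ forces $w^s=0$.

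The main technical step I carry out first is the auxiliary claim that for every homogeneous $u\in V^r$ and every $m\in\Z$,
\[
\sum_{i=1}^s Y_M(u_m v^i,z)\,w^i \;=\; 0.
\]
I prove it via weak associativity. Since $\{w^1,\dots,w^s\}$ is a finite set, I can pick one nonnegative integer $l$ with $u_{n+r/T}w^i=0$ for all $n\geq l$ and all $i$, so that weak associativity holds term by term with this single $l$. Summing over $i$, the factor $Y_M(u,z_0+z_2)$ on the left can be pulled outside the sum, and the hypothesis $\sum_i Y_M(v^i,z_2)w^i=0$ makes the left-hand side vanish, leaving
\[
(z_2+z_0)^{l+r/T}\sum_{i=1}^s Y_M(Y(u,z_0)v^i,z_2)\,w^i = 0.
\]
Since $(z_2+z_0)^{l+r/T}$ is invertible in the appropriate ring of formal series in $z_0$ with coefficients allowing fractional powers of $z_2$, it can be cancelled; taking the coefficient of $z_0^{-m-1}$ produces the claim.

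Iterating this identity along monomials $u^1_{m_1}\cdots u^k_{m_k}\in\mathcal{U}(V)$, which is legitimate because the vectors $w^i$ never change and so a uniform $l$ can be chosen at each stage, and then extending linearly, gives
\[
\sum_{i=1}^s Y_M(a v^i,z)\,w^i \;=\; 0 \qquad \text{for every } a\in \mathcal{U}(V).
\]
By Lemma \ref{UV}, $V$ is a simple $\mathcal{U}(V)$-module, and by hypothesis it has countable dimension; hence Corollary \ref{Jac1} furnishes $a\in\mathcal{U}(V)$ with $av^i=0$ for $i<s$ and $av^s=\mathbf{1}$. Substituting this $a$ into the iterated identity and using $Y_M(\mathbf{1},z)=\mathrm{id}_M$ yields $w^s=0$, contradicting the assumption.

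The main obstacle is really the first step: verifying cleanly that $(z_2+z_0)^{l+r/T}$ is invertible in the correct space of formal series with fractional powers of $z_2$, and confirming that a uniform choice of $l$ survives the iteration (so that after replacing the $v^i$ by $a v^i$ one can still apply weak associativity with the same bookkeeping). Once this formal-variable step is in place, the rest is a direct application of Jacobson's density theorem together with the vacuum axiom for $Y_M$.
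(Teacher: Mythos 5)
Your proof is correct and follows essentially the same route as the paper's: weak associativity propagates the relation to $\sum_i Y_M(av^i,z)w^i=0$ for all $a\in\mathcal{U}(V)$, and Jacobson's density theorem applied to $V$ as a simple countable-dimensional $\mathcal{U}(V)$-module separates the $v^i$. Your choice of target $v^s\mapsto\mathbf{1}$ is a mild streamlining: the paper instead realizes $v^1\mapsto v^1$, $v^i\mapsto 0$, and then needs one further round of weak associativity together with the simplicity of $V$ to reach the vacuum contradiction.
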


\begin{proof}
Assume that $$Y_M(v^1,z)w^1+ \cdots +Y_M(v^s,z)w^s = 0.$$
By weak associativity, for any $r$ and any $u \in V^r$, there exists some $k \in \Z_+$ such that
\begin{align*}
& (z_2+z_0)^{k+\frac{r}{T}}(Y_M(Y(u, z_0)v^1, z_2)w^1 + \cdots + Y_M(Y(u, z_0)v^s, z_2)w^s)\\
&=(z_0+z_2)^{k+\frac{r}{T}}(Y_M(u, z_0+z_2)Y(v^1,z_2)w^1 + \cdots + Y_M(u, z_0+z_2)Y(v^s,z_2)w^s)\\
&=0,
\end{align*}
which implies that
\begin{align*}
Y_M(Y(u, z_0)v^1, z_2)w^1 + \cdots + Y_M(Y(u, z_0)v^s, z_2)w^s=0.
\end{align*}
Hence, for any $u \in V$ and $n \in \Z$, we have
\begin{align} \label{equation1}
Y_M(u_nv^1, z_2)w^1 + \cdots + Y_M(u_nv^s, z_2)w^s=0.
\end{align}

Let $X=\C v_1 \bigoplus  \cdots \bigoplus \C v_n$.
Define a linear mapping $f \in \text{Hom}_\C(X, V)$ by $f(v_1)=v_1$ and $f(v_i)=0$ for any $i \neq 0.$
Since $V$ is a  simple $\mathcal{U}(V)$-module of countable dimension,
by Corollary \ref{Jac1} and Corollary \ref{ouov}, there exist some  $u^1, \cdots, u^p \in V$ and $i_1, \cdots, i_p \in \Z$ such that
$$(u^1_{i_1}+\cdots + u^p_{i_p})v^1=v^1 \ \ \text{and} \ \ (u^1_{i_1}+\cdots + u^p_{i_p})v^i=0, \ \ \text{for} \ \ i \neq 1.$$
By (\ref{equation1}), we obtain  $Y_M(v^1, z_2)w^1=0$.
Similarly, using weak associativity again, we see that
$Y_M(u_nv^1, z_2)w^1=0$ for any $u \in V$ and $n \in \Z$.
Since $V$ is simple, $Y_M(u, z_2)w^1 \equiv 0$ for any $u \in V$. This contradicts to  the fact that $Y_M({\bf 1}, z)w^1=w^1 \neq 0$.
The proof is complete.
\end{proof}

\begin{remark}
When $V$ is a simple vertex algebra and $M$ is an irreducible admissible $V$-module,
Lemma \ref{not0} have been  obtained in \cite{DM1}.

\end{remark}

The following Lemma is well known (see also \cite{Lang} ).

\begin{lemma} \label{faithful1}
Let $G$ be a finite group.
Let $\rho: G \rightarrow \text{End}(M)$ be a finite dimensional faithful representation of $G.$
Then every irreducible representation of $G$ appears in $M^{\otimes n}$ for some $n \geq 0$
(here and below, $M^{\otimes 0}=\C$ is the trivial representation).
\end{lemma}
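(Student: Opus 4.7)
I would prove this by a character-theoretic contradiction, which is the standard Burnside-style argument.

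My plan is to use that the multiplicity of an irreducible representation $W$ of $G$ in $M^{\otimes n}$ is the inner product $\langle \chi_M^n, \chi_W\rangle = \frac{1}{|G|}\sum_{g\in G}\chi_M(g)^n\overline{\chi_W(g)}$, where $\chi_M$ and $\chi_W$ are the characters of $M$ and $W$. So it suffices to show that for each irreducible $W$, there is some $n \ge 0$ with
\[
\sum_{g\in G}\chi_M(g)^n\overline{\chi_W(g)} \neq 0.
\]

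First, I would argue by contradiction: suppose that for some irreducible $W$ this sum vanishes for every $n \ge 0$. Let $\lambda_1,\dots,\lambda_k$ be the distinct complex values taken by the function $g \mapsto \chi_M(g)$ on $G$, and group terms accordingly. Setting
\[
c_j = \sum_{g\in G,\ \chi_M(g)=\lambda_j}\overline{\chi_W(g)},
\]
the assumed vanishing gives $\sum_{j=1}^k \lambda_j^n c_j = 0$ for all $n \ge 0$. Restricting to $n=0,1,\dots,k-1$ produces a Vandermonde system in the distinct scalars $\lambda_j$, which forces $c_j=0$ for every $j$.

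The final step, which is the heart of the argument, is to use the faithfulness of $\rho$. One of the $\lambda_j$ equals $\dim M = \chi_M(1)$. Since $\chi_M(g) = \dim M$ forces every eigenvalue of $\rho(g)$ to equal $1$, i.e.\ $\rho(g)=\mathrm{id}_M$, faithfulness of $\rho$ implies that $g=1$ is the \emph{only} group element with $\chi_M(g)=\dim M$. Consequently the corresponding $c_j$ reduces to the single term $\overline{\chi_W(1)} = \dim W \ne 0$, contradicting $c_j=0$. Hence the original multiplicity is nonzero for some $n$, completing the proof.

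I do not expect a serious obstacle: the only subtlety is ensuring that $\chi_M(g) = \dim M$ forces $\rho(g)=\mathrm{id}$ (not merely $|\chi_M(g)| = \dim M$, which would only give a scalar), so I would state that eigenvalue observation explicitly. The case $W$ trivial is already handled by $n=0$, so I could even restrict attention to nontrivial $W$ if desired.
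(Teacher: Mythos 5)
Your proof is correct; the paper itself gives no argument for this lemma, simply declaring it well known and citing Lang, and what you have written is exactly the standard Burnside-type character proof one finds there: the Vandermonde step legitimately forces all $c_j=0$ since the $\lambda_j$ are distinct, and your observation that $\chi_M(g)=\dim M$ forces all eigenvalues of the finite-order operator $\rho(g)$ (roots of unity) to equal $1$, hence $\rho(g)=\mathrm{id}_M$ and $g=1$ by faithfulness, is precisely the point where faithfulness enters. Nothing is missing.
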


\begin{corollary} \label{faithful2}
Let $G$ be a finite group. Let $M$ be a faithful representation of $G$ (we do not assume that $M$ is finite dimensional).
Then every irreducible representation of $G$ appears in $M^{\otimes n}$ for some $n \geq 0$.
\end{corollary}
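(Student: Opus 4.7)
The plan is to reduce to the finite-dimensional case already established in Lemma \ref{faithful1}. The only obstacle is that $M$ is allowed to be infinite-dimensional, so I need to carve out a finite-dimensional faithful $G$-submodule inside $M$ and then use it to generate every irreducible inside $M^{\otimes n}$.

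First, for each nonidentity $g \in G$, since the action on $M$ is faithful, choose a vector $v_g \in M$ with $g v_g \neq v_g$. Let $W_g$ be the $\C[G]$-submodule of $M$ generated by $v_g$; this is spanned by $\{h v_g : h \in G\}$, hence finite-dimensional, and the element $g$ acts nontrivially on $W_g$ because $gv_g \neq v_g$. Set
\[
N = \sum_{g \in G \setminus \{1\}} W_g \subseteq M.
\]
Then $N$ is a finite-dimensional $G$-submodule of $M$ (finite sum of finite-dimensional submodules), and the representation of $G$ on $N$ is faithful, since for any nonidentity $g$, $g$ already acts nontrivially on $W_g \subseteq N$.

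Next, apply Lemma \ref{faithful1} to the finite-dimensional faithful representation $N$: every irreducible representation of $G$ appears as a subrepresentation of $N^{\otimes n}$ for some $n \geq 0$. Because $N \subseteq M$ as $G$-modules and the tensor product functor is exact on $\C[G]$-modules (equivalently, $N^{\otimes n}$ embeds as a $G$-submodule of $M^{\otimes n}$), every irreducible representation of $G$ appears in $M^{\otimes n}$ as well, which is the desired conclusion.

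The main (and only) subtlety is the construction of $N$: one must ensure simultaneously that it is finite-dimensional and that no nonidentity element of $G$ acts trivially on it. Taking the $G$-span of a single test vector per nonidentity element and summing the resulting finite collection of finite-dimensional submodules accomplishes both at once, and the rest is an invocation of the finite-dimensional lemma.
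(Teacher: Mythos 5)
Your proof is correct and follows the same overall strategy as the paper: produce a finite-dimensional faithful $G$-submodule $N$ of $M$ and then invoke Lemma \ref{faithful1} together with the embedding $N^{\otimes n}\hookrightarrow M^{\otimes n}$. The only difference is how $N$ is built: the paper uses semisimplicity of $M$ as a $\C[G]$-module and takes $N$ to be the sum of one representative of each isomorphism class of simple submodules occurring in $M$ (faithfulness of this $N$ then needs the observation that a group element acting trivially on each representative acts trivially on every simple constituent, hence on all of $M$), whereas you take, for each $g\neq 1$, the cyclic submodule generated by a test vector moved by $g$ and sum these finitely many finite-dimensional pieces. Your construction is slightly more elementary — it makes faithfulness of $N$ immediate and does not appeal to Maschke's theorem — while the paper's version packages $N$ in terms of the isotypic structure of $M$. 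Both are complete arguments.
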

\begin{proof}
Note that $M$ is a semisimple $G$-module. Let $N_1,...,N_k$ be the maximal inequivalent simple $G$-submodules of $M.$ That is, every simple $G$-submodule of $M$ is isomorphic to $N_i$ for some $i.$ Set $N=\sum_{i=1}^kN_i$ which is a finite dimensional faithful submodule of $M.$ By Lemma \ref{faithful1}, every irreducible representation of $G$ appears in $N^{\otimes n}$ for some $n \geq 0.$ The result follows.
\end{proof}

\section{ Duality pairs}

In this section, we introduce dual pairs in the vertex algebra setting.


\begin{definition}
Let $A$ be an associative algebra and let $V$ be a vertex algebra.
If $M$ is both an $A$-module and a $V$-module such that the actions of $A$ and the actions of $V$ on $M$ commute with
each other, then we call $M$ is an $A \otimes V$-module.\\
The definition of $A \otimes V$-module isomorphism and simple $A \otimes V$-module is obvious.
\end{definition}

\begin{remark}
In the following sections, we will let $A$ be a group algebra or it's generalization and let $V$ be a fixed point vertex  algebra $V^G$.
\end{remark}

Note that for an $A$-module $S$ and a $V$-module $W$, $S \otimes W$ is an $A \otimes V$-module
by defining $a (s \otimes w)=as \otimes w$ and $v_n(s \otimes w)=s \otimes v_nw$ for $a \in A, v \in V, s \in S,   w \in W$ and $n \in \Z$.



Let $A$ be a finite dimensional semisimple associative algebra.
Let $\Lambda$ be the set of all irreducible  characters of $A$.
For $\lambda \in \Lambda$,  we denote the corresponding irreducible representation by $W_{\lambda}$.

Assume that $M$ is an $A \otimes V$-module.
Let $M^{\lambda}$ be the sum of all  $A$-submodules of $M$  isomorphic to $W_{\lambda}$.
Since the actions of $A$ and the actions of $V$ on $M$ commute with each other,
$M^{\lambda}$ is in fact an $A \otimes V$-submodule of $M$.
Hence, $M$, as an $A \otimes V$-module,  has the following direct sum decomposition:
$$M=\bigoplus_{\lambda \in \Lambda} M^{\lambda}.$$
Let $M_{\lambda}=\text{Hom}_A(W_{\lambda}, M)$ be the multiplicity space of $W_{\lambda}$ in $M$.
Note that
$M_{\lambda}$ has a $V$-module structure by defining
\begin{align} \label{eq4-1}
(v_nf)(w)=v_n(f(w))
\end{align}
for $v \in V, f\in M_{\lambda}, n \in \Z$  and  $ w \in W_{\lambda}.$
We can realize $M_{\lambda}$ as a $V$-submodule of $M$
in the following way: Let $w \in W_{\lambda}$ be a
fixed nonzero
vector. Then we can identify
$\text{Hom}_A(W_{\lambda}, M)$  with the subspace
$$\{f(w) \ | \ f \in  \text{Hom}_A(W_{\lambda}, M) \}$$
of $M^{\lambda}$. Note that the realization depends on the choice of $w$.


Define a linear mapping $\theta: W_{\lambda} \otimes M_{\lambda} \rightarrow M^{\lambda}$ by $\theta(w \otimes f)=f(w)$ for
$w \in W_{\lambda}$ and $f \in M_{\lambda}$.
 Now, we have the following Lemma.

\begin{lemma}
The mapping $\theta$ is an $A \otimes V$-module isomorphism.
\end{lemma}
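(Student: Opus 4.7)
The plan is to verify three properties of $\theta$: that it intertwines the $A$-action, that it intertwines the $V$-action, and that it is bijective. The first two are direct computations from the definitions, while the third uses the classical isotypic decomposition for the semisimple algebra $A$ together with Schur's lemma.

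For $A$-equivariance, given $a \in A$, $w \in W_\lambda$ and $f \in M_\lambda = \text{Hom}_A(W_\lambda, M)$, I would compute
\[
\theta(a(w \otimes f)) = \theta(aw \otimes f) = f(aw) = a f(w) = a\,\theta(w \otimes f),
\]
using that $f$ is an $A$-module homomorphism. For $V$-equivariance, I would apply the definition (\ref{eq4-1}) of the $V$-action on $M_\lambda$ directly:
\[
\theta(v_n(w \otimes f)) = \theta(w \otimes v_n f) = (v_n f)(w) = v_n(f(w)) = v_n\,\theta(w \otimes f).
\]

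For bijectivity, I would first use semisimplicity of $A$ to write $M^\lambda$ as an internal direct sum $M^\lambda = \bigoplus_{i \in I} W^{(i)}$ where each $W^{(i)} \cong W_\lambda$ as $A$-modules, with $I$ possibly infinite. Fix $A$-isomorphisms $\phi_i : W_\lambda \to W^{(i)}$. The next step is to claim that $\{\phi_i\}_{i \in I}$ is a $\C$-basis of $M_\lambda$: any $f \in M_\lambda$ has image $f(W_\lambda)$ finite dimensional, hence contained in some finite subsum $\bigoplus_{j \in J} W^{(j)}$, and Schur's lemma applied to each projection $\pi_j \circ f \in \text{Hom}_A(W_\lambda, W^{(j)}) = \C \phi_j$ gives a unique expression $f = \sum_{j \in J} c_j \phi_j$. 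Under the induced identification $M_\lambda \cong \bigoplus_i \C \phi_i$, the map $\theta$ becomes the evident isomorphism $W_\lambda \otimes \bigoplus_i \C \phi_i \to \bigoplus_i W^{(i)}$ sending $w \otimes \phi_i$ to $\phi_i(w) \in W^{(i)}$, which is manifestly bijective.

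I do not anticipate any serious obstacle, as every step is standard semisimple representation theory. The one subtlety worth flagging is that $M^\lambda$ need not be finitely generated over $A$, so the index set $I$ can be infinite; this is handled by the finite dimensionality of $W_\lambda$, which forces both the image of each individual $f$ and the support of each individual element of $M^\lambda$ in the decomposition to involve only finitely many summands $W^{(i)}$, making every sum encountered in the argument finite on any given element.
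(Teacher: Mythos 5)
Your proof is correct and follows the same route as the paper, which simply cites the standard isotypic-decomposition fact (\cite[Proposition 4.1.15]{GW}) for the linear bijectivity and notes that equivariance is a direct check; you have merely written out the details of that citation, correctly handling the possibly infinite multiplicity via the finite dimensionality of $W_\lambda$. No gaps.
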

\begin{proof}
It is well known that $\theta$ is a linear isomorphism (see also \cite[Proposition 4.1.15]{GW}).
It is easy to verify that $\theta$ is an $A \otimes V$-module homomorphism. The proof is complete.
\end{proof}
By the discussion above, $M$, as an $A \otimes V$-module, has the following decomposition:
$$M = \bigoplus_{\lambda \in \Lambda} W_{\lambda} \otimes M_{\lambda}.$$

\begin{definition} \label{def-dual}
Let $A$ be a finite dimensional semisimple associative algebra and let $V$ be a vertex algebra.
If  $M$ is an $A \otimes V$-module such that
\begin{enumerate}[{(1)}]

\item $M_{\lambda}$ is an irreducible  $V$-module for any $\lambda \in \Lambda,$

\item $M_\lambda \cong M_\mu $ if and only if $\lambda=\mu,$
\end{enumerate}
then we say that the pair $(A, V)$ forms a dual pair on $M.$
\end{definition}

\begin{remark}
Let $M$ be an $A \otimes V$-module.
Denote by $\text{Irr}_M(A)$ the set of isomorphism classes of irreducible
$A$-modules $W$ such that $\text{Hom}_A (W, M) \neq 0$
and denote by $\text{Irr}_M(V)$ the set of isomorphism classes of irreducible
$V$-modules $N$ such that $\text{Hom}_V (N, M) \neq 0$.
If $(A, V)$ forms a  dual pair on $M$,
then there is a one-to-one correspondence from $\text{Irr}_M(A)$ to $\text{Irr}_M(V)$.
\end{remark}

\begin{remark}

Assume that $(A,V)$  forms a  dual pair on $M$.
Since $\Lambda$ is a finite set  and $W_\lambda $ is finite dimensional  for any $\lambda \in \Lambda$,
$M$ is a  direct sum of finitely many  irreducible $V$-modules.
\end{remark}

The following result is the key  for the further discussions.

\begin{theorem} \label{Dual-Th}
Let $A$ be a finite dimensional semisimple associative algebra and let $V$ be a vertex algebra.
Let $M$ be an $A \otimes V$-module.  Assume that  for any finite dimensional $A$-submodule $X$ of $M$ and any $f \in \text{Hom}_A(X,M)$,
there exist $v^1, \cdots, v^n \in V$ and $i_1, \cdots, i_n \in \Z$ such that
$$f=v^1_{i_1}+ \cdots + v^n_{i_n}.$$
Then
\begin{enumerate}[{(1)}]
	
	\item $M_{\lambda}$ is an irreducible  $V$-module for any $\lambda \in \text{Irr}_M(A),$
	
	\item $M_\lambda \cong M_\mu $ if and only if $\lambda=\mu.$
\end{enumerate}
\end{theorem}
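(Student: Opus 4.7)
The plan is to exploit the hypothesis, a vertex algebra analogue of Jacobson's density theorem, to produce, for any $A$-homomorphism $\phi$ between finite dimensional $A$-submodules of $M$, a finite sum $\sum_j v^j_{i_j}$ of modes from $V$ that realizes $\phi$. Part (1) will then follow by using such a sum to move a nonzero $f \in M_\lambda$ to an arbitrary $g \in M_\lambda$; part (2) will be a contradiction argument based on a single $A$-homomorphism that vanishes on one isotypic summand and equals the identity on another.

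For (1), I would fix $\lambda \in \text{Irr}_M(A)$ and a basis $w_1, \ldots, w_d$ of $W_\lambda$. Given a nonzero $f \in M_\lambda = \text{Hom}_A(W_\lambda, M)$, $f$ is injective (since $W_\lambda$ is simple), so $X_f := f(W_\lambda)$ is a finite dimensional $A$-submodule of $M$. For an arbitrary $g \in M_\lambda$, define $\phi \in \text{Hom}_A(X_f, M)$ by $\phi(f(w)) = g(w)$; this is well-defined and $A$-linear because $f$ is an $A$-isomorphism onto $X_f$. The hypothesis then produces $v^j \in V$ and $i_j \in \Z$ with $\phi = \sum_j v^j_{i_j}$ on $X_f$, and using the definition (\ref{eq4-1}) of the $V$-action on $M_\lambda$ this becomes $(\sum_j v^j_{i_j} f)(w_k) = \phi(f(w_k)) = g(w_k)$ for each $k$, so $\sum_j v^j_{i_j} f = g$ in $M_\lambda$. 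Thus every nonzero $f$ generates all of $M_\lambda$ as a $V$-module, proving irreducibility.

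For (2), suppose $\lambda \neq \mu$ but there is a $V$-module isomorphism $\psi : M_\mu \to M_\lambda$. Pick a nonzero $g \in M_\mu$ and set $f := \psi(g) \neq 0$. Because $W_\lambda \not\cong W_\mu$ as $A$-modules, the $A$-submodules $f(W_\lambda)$ and $g(W_\mu)$ intersect trivially, so $X := f(W_\lambda) \oplus g(W_\mu)$ is a finite dimensional $A$-submodule of $M$. I would then define $\phi \in \text{Hom}_A(X,M)$ to be zero on $f(W_\lambda)$ and the identity on $g(W_\mu)$, and invoke the hypothesis to write $\phi = \sum_j v^j_{i_j}$ on $X$. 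Testing on basis vectors as in (1) yields the two identities $\sum_j v^j_{i_j} f = 0$ in $M_\lambda$ and $\sum_j v^j_{i_j} g = g$ in $M_\mu$. Applying the $V$-module isomorphism $\psi$ to the second gives $\sum_j v^j_{i_j} f = \psi(g) = f$, contradicting the first since $f \neq 0$.

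The main bookkeeping obstacle, and the place I would work most carefully, is the dual role played by each element of $M_\lambda$: as a vector in the $V$-module $M_\lambda$ on which modes act via (\ref{eq4-1}), and simultaneously as an $A$-map $W_\lambda \to M$ whose image is the finite dimensional $A$-submodule to which the hypothesis is applied. The compatibility $(v_n f)(w) = v_n(f(w))$ is exactly what allows one to transport equalities in $M$, tested at basis vectors of $W_\lambda$, back into equalities in $M_\lambda$, and once this translation is kept straight both parts go through quickly.
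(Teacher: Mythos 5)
Your proposal is correct and follows essentially the same route as the paper: in (1) you apply the density hypothesis to the $A$-submodule $f(W_\lambda)$ (the paper's $W_\lambda\otimes x$) to move one element of $M_\lambda$ to another, and in (2) you use a homomorphism that kills one copy and fixes the other, then transport it through the assumed isomorphism to get a contradiction. The only difference is notational (you work with images $f(W_\lambda)$ rather than the tensor realization $W_\lambda\otimes x$), and your bookkeeping of the action $(v_nf)(w)=v_n(f(w))$ matches the paper's.
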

\begin{proof}
(1)  Let  $\lambda \in \text{Irr}_M(A)$ and  $x,y \in M_{\lambda}$ be nonzero.
We need to find  $v^1, \cdots, v^n \in V$ and $i_1, \cdots, i_n \in \Z$ such that $y =(v^1_{i_1}+ \cdots + v^n_{i_n})x$. Consequently, $M_{\lambda}$ is an irreducible $V$-module.

Let $X= W_{\lambda} \otimes x$. Then $X$ is a finite dimensional $A$-submodule of $M$.
Define a mapping $f \in \text{Hom}_\C(X, M)$ by $f(w \otimes x)=w \otimes y$  for $w \in W_\lambda$.
It is claer that $f \in \text{Hom}_A(X, M)$.
Then, there exist $v^1, \cdots, v^n \in V$ and $i_1, \cdots, i_n \in \Z$ such that
$$f=v^1_{i_1}+ \cdots + v^n_{i_n}.$$
Thus  we have
$$w \otimes y =f(w \otimes x)=(v^1_{i_1}+ \cdots + v^n_{i_n})(w \otimes x)=w \otimes (v^1_{i_1}+ \cdots + v^n_{i_n})x,$$
for any $w \in W_\lambda$. This implies that $(v^1_{i_1}+ \cdots + v^n_{i_n})x=y$, as required.\\

(2) Let  $\lambda, \mu\in \text{Irr}_M(A)$ and $\lambda \neq \mu$. Assume that
$\phi: M_{\lambda}\to M_{\mu}$ be  a $V$-module isomorphism. Let $0\ne x\in M_{\lambda}.$ Then  $0\ne y= \phi(x)\in M_{\mu}.$ Set $X= (W_{\lambda} \otimes x)\oplus (W_{\mu}\otimes y)$ which is an $A$-submodule of $M.$ Define $f\in  \text{Hom}_A(X, M)$
such that $f(w_1\otimes x+w_2\otimes y)=w_2\otimes y$ for $w_1\in W_{\lambda}$ and
$w_2\in W_{\mu}.$ Then there exist $v^1, \cdots, v^n \in V$ and $i_1, \cdots, i_n \in \Z$ such that
$$f=v^1_{i_1}+ \cdots + v^n_{i_n}.$$
Then $(v^1_{i_1}+ \cdots + v^n_{i_n})x=0$ and $(v^1_{i_1}+ \cdots + v^n_{i_n})y=y.$
So
$$0=\phi((v^1_{i_1}+ \cdots + v^n_{i_n})x)=(v^1_{i_1}+ \cdots + v^n_{i_n})\phi(x)=(v^1_{i_1}+ \cdots + v^n_{i_n})y=y,$$
a contradiction. The proof is complete.
\end{proof}

The next result tells us  that the converse of Theorem \ref{Dual-Th} is also true if $V$ is of countable dimension, although we do not  need it  in this paper.

\begin{proposition}
Let $A$ be a finite dimensional semisimple associative algebra and let $V$ be a vertex algebra of  countable dimension.
Let $M$ be an $A \otimes V$-module.
Assume  that  (1)  $M_{\lambda}$ is an irreducible  $V$-module for any $\lambda \in \text{Irr}_M(A),$ (2) $M_\lambda \cong M_\mu $ if and only if $\lambda=\mu.$
Then for any finite dimensional $A$-submodule $X$ of $M$ and  any $f \in \text{Hom}_A(X,M)$,
there exist $v^1, \cdots, v^n \in V$ and $i_1, \cdots, i_n \in \Z$ such that
$$f=v^1_{i_1}+ \cdots + v^n_{i_n}.$$
\end{proposition}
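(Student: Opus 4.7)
The plan is to use the multiplicity decomposition $M=\bigoplus_{\lambda\in\Lambda}W_{\lambda}\otimes M_{\lambda}$ together with the fact that the $M_{\lambda}$ are inequivalent irreducible $\mathcal{U}(V)$-modules of countable dimension in order to apply the second form of Jacobson's density theorem (Corollary \ref{Jac2}), and then to use Lemma \ref{ouov} to rewrite the resulting element of $\mathcal{U}(V)$ as a finite sum of single mode operators on the relevant finite dimensional subspace.

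First I would decompose the data. Since $X$ is a finite dimensional $A$-submodule of $M$, only finitely many isotypic components $\lambda_{1},\ldots,\lambda_{k}\in\text{Irr}_M(A)$ contribute, and because each $W_{\lambda_i}$ is absolutely simple over $\C$ the corresponding piece $X\cap(W_{\lambda_i}\otimes M_{\lambda_i})$ equals $W_{\lambda_i}\otimes Y_{\lambda_i}$ for some finite dimensional subspace $Y_{\lambda_i}\subseteq M_{\lambda_i}$, so $X=\bigoplus_{i=1}^{k}W_{\lambda_i}\otimes Y_{\lambda_i}$. An $A$-module homomorphism out of such $X$ must send the $\lambda_i$-isotypic summand into the $\lambda_i$-isotypic component of $M$; using $\text{End}_A(W_{\lambda_i})=\C$, the restriction of $f$ to $W_{\lambda_i}\otimes Y_{\lambda_i}$ has the form $\mathrm{id}_{W_{\lambda_i}}\otimes g_{\lambda_i}$ for a uniquely determined linear map $g_{\lambda_i}\colon Y_{\lambda_i}\to M_{\lambda_i}$, so $f$ is encoded by the tuple $(g_{\lambda_1},\ldots,g_{\lambda_k})$.

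Next I would produce a single enveloping-algebra element realising this data. By hypothesis (1) and Lemma \ref{UV}(1), each $M_{\lambda_i}$ is an irreducible $\mathcal{U}(V)$-module, and by hypothesis (2) together with Lemma \ref{UV}(2) these are pairwise inequivalent. Because $V$ is of countable dimension, so is $\mathcal{U}(V)$, and hence each $M_{\lambda_i}=\mathcal{U}(V)m$ (for any nonzero $m\in M_{\lambda_i}$) is of countable dimension as well. Corollary \ref{Jac2} then yields an element $a\in\mathcal{U}(V)$ with $a|_{Y_{\lambda_i}}=g_{\lambda_i}$ for every $i$.

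Finally I would convert this abstract element into a sum of single modes. Writing $a$ as a finite linear combination of products $u^{j,1}_{n_{j,1}}\cdots u^{j,k_j}_{n_{j,k_j}}$ and applying Lemma \ref{ouov} to each product on the finite dimensional subspace $Y:=\sum_{i}Y_{\lambda_i}\subseteq M$, each such product agrees on $Y$ with some finite sum of single mode operators. Combining these produces $v^{1},\ldots,v^{n}\in V$ and $i_{1},\ldots,i_{n}\in\Z$ with $(v^{1}_{i_{1}}+\cdots+v^{n}_{i_{n}})|_{Y}=a|_{Y}$; since each mode commutes with the $A$-action, on $W_{\lambda_i}\otimes Y_{\lambda_i}$ it acts as $\mathrm{id}\otimes(\text{restriction to }Y_{\lambda_i})$, so this identity upgrades to $\sum_{l}v^{l}_{i_{l}}=f$ on all of $X$. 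The main obstacle I expect is the bookkeeping at this last step, namely verifying that Lemma \ref{ouov} — which is phrased for finite dimensional subspaces of a single module — can be applied uniformly across the distinct isotypic pieces by enlarging to the single common subspace $Y$, and that the resulting mode operators really do act componentwise on $X$ via the $A$-commutation.
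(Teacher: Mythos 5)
Your proof is correct and takes essentially the same route as the paper's: decompose $X$ into isotypic pieces $W_{\lambda_i}\otimes Y_{\lambda_i}$, identify $f$ with a tuple of linear maps $Y_{\lambda_i}\to M_{\lambda_i}$, apply the second Jacobson density theorem (Corollary \ref{Jac2}) to the inequivalent irreducible countable-dimensional $\mathcal{U}(V)$-modules $M_{\lambda}$, and then use Lemma \ref{ouov} to convert the resulting element of $\mathcal{U}(V)$ into a finite sum of single modes. You merely spell out a few details the paper leaves implicit (the countable dimensionality of the $M_{\lambda}$ and the bookkeeping that upgrades the identity on the multiplicity spaces to one on all of $X$ via commutation with the $A$-action), which is harmless.
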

\begin{proof}
	Note that $M=\oplus_{\lambda \in \text{Irr}_M(A)}W_{\lambda}\otimes M_{\lambda}$.  Let $X$ be a finite dimensional $A$-submodule of $M$ and let $f \in \text{Hom}_A(X,M)$.
Note that there exists a finite dimensional subspace $N_{\lambda}$ of $M_\lambda$ for $\lambda \in \text{Irr}_M(A)$ such that
$X=\bigoplus_{\lambda \in \text{Irr}_M(A)} W_{\lambda} \otimes N_{\lambda}$. Moreover, we have
\begin{align*}
\text{Hom}_A(X, M)&=
\text{Hom}_A(\oplus_{\lambda \in \text{Irr}_M(A)} W_{\lambda} \otimes N_{\lambda}, \oplus_{\lambda \in \text{Irr}_M(A)} W_{\lambda} \otimes M_{\lambda})\\
& \cong \oplus_{\lambda \in \text{Irr}_M(A)} \text{Hom}_A(W_\lambda \otimes N_\lambda, W_\lambda \otimes M_\lambda)\\
& \cong \oplus_{\lambda \in \text{Irr}_M(A)} \text{Hom}(N_\lambda, M_\lambda).
\end{align*}
Since $M_{\lambda}, \lambda \in \text{Irr}_M(A)$ are  inequivalent irreducible $\mathcal{U}(V)$-modules of countable dimensions,
it follows from Corollary \ref{ouov} and Corollary \ref{Jac2} that
there  exist $v^1, \cdots, v^n \in V$ and $i_1, \cdots, i_n \in \Z$ such that
$$f=v^1_{i_1}+ \cdots + v^n_{i_n},$$
as desired.
\end{proof}

\section{Duality I}

From now on, we always assume that $V$ is a  simple vertex algebra of countable dimension and $G$ be a finite  automorphism group of $V$.
Note that $V$ is  naturally a faithful $G$-representation.
Moreover, since the actions of $G$ and  $V^G$ on $V$ commute, $V$ is a $\C[G] \otimes V^G$-module. We give a duality result for  the actions of $G$ and  $V^G$ on $V$ in this section.

Let $V[[z_1^{\pm 1}, \cdots, z_n^{\pm 1}]]$ be the $V$-valued formal power series in variables $z_1, \cdots, z_n$.
Then $V[[z_1^{\pm 1}, \cdots, z_n^{\pm 1}]]$ is naturally a $G$-representation such that
$$g(\sum_{i_1, \cdots, i_n \in \Z}v^{i_1, \cdots, i_n}z_1^{i_1}\cdots z_n^{i_n})
=\sum_{i_1, \cdots, i_n \in \Z}(gv^{i_1, \cdots, i_n})z_1^{i_1}\cdots z_n^{i_n},$$
where $g \in G$ and each $v^{i_1, \cdots, i_n} \in V$. Given a $J=(j_1, \cdots, j_n) \in \Z^n,$ we define a projection
$$P_J: V[[z_1^{\pm 1}, \cdots, z_n^{\pm 1}]] \rightarrow V$$
by
$$P_J(\sum_{i_1, \cdots, i_n \in \Z}v^{i_1, \cdots, i_n}z_1^{i_1}\cdots z_n^{i_n})=v^{j_1, \cdots, j_n}.$$
It is clear that for any $J \in \Z^n$, $P_J$ is a $G$-homomorphism.

Let $n \geq 0$. Define a linear mapping
$$\varphi_n: V^{\otimes (n+1)} \rightarrow  V[[z_1^{\pm 1}, \cdots, z_n^{\pm 1}]]$$
by
$$\varphi_n (v^n \otimes \cdots \otimes v^1 \otimes v^0 )=Y(v^n, z_n) \cdots Y(v^1,z_1)v^0.$$
Note that $\varphi_0=Id_V.$

\begin{lemma}\label{l5.1}
The mapping $\varphi_n$ is an injective $G$-homomorphism.
\end{lemma}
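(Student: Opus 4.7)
The plan is to verify $G$-equivariance by a direct computation and then prove injectivity by induction on $n$ using Lemma \ref{not0}.

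For equivariance, since each $g \in G$ is an automorphism of $V$, we have $gY(u,z)g^{-1}=Y(gu,z)$. Iterating this identity and using $g\mathbf{1}=\mathbf{1}$ to handle coefficient actions gives
\[
g\cdot\varphi_n(v^n\otimes\cdots\otimes v^0)
= Y(gv^n,z_n)\cdots Y(gv^1,z_1)(gv^0)
= \varphi_n\bigl(g\cdot(v^n\otimes\cdots\otimes v^0)\bigr),
\]
so $\varphi_n$ commutes with the $G$-action.

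For injectivity, I would induct on $n$; the base case $n=0$ is trivial since $\varphi_0=\mathrm{Id}_V$. For the inductive step, view $V^{\otimes(n+1)}\cong V\otimes V^{\otimes n}$ and write an arbitrary element of the kernel as $\xi=\sum_{i=1}^{s}a_i\otimes y_i$ with $a_1,\dots,a_s\in V$ linearly independent and $y_i\in V^{\otimes n}$. Then
\[
0=\varphi_n(\xi)=\sum_{i=1}^{s}Y(a_i,z_n)\,\varphi_{n-1}(y_i)
\]
as an identity in $V[[z_1^{\pm 1},\ldots,z_n^{\pm 1}]]$. Expanding each inner factor as $\varphi_{n-1}(y_i)=\sum_{J\in\Z^{n-1}}c_J^{(i)}z_1^{j_1}\cdots z_{n-1}^{j_{n-1}}$ with $c_J^{(i)}\in V$ and reading off the coefficient of each monomial $z_1^{j_1}\cdots z_{n-1}^{j_{n-1}}$ yields
\[
\sum_{i=1}^{s}Y(a_i,z_n)\,c_J^{(i)}=0\qquad\text{in }V[[z_n^{\pm 1}]]
\]
for every $J\in\Z^{n-1}$. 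Since $V$ is itself a simple $V$-module of countable dimension and the $a_i$ are linearly independent, Lemma \ref{not0} (applied with $M=V$ and $g=1$) forces $c_J^{(i)}=0$ for all $i$ and $J$; equivalently, $\varphi_{n-1}(y_i)=0$ for each $i$. The inductive hypothesis then yields $y_i=0$ for each $i$, so $\xi=0$.

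The main conceptual obstacle is that Lemma \ref{not0} is a single-variable statement while $\varphi_n$ lands in a multivariable formal power series space. The device that resolves this is to peel off the outermost vertex operator $Y(a_i,z_n)$, rewrite the tensor so that the outer factors $a_i$ are linearly independent, and compare coefficients of monomials in the inner variables $z_1,\ldots,z_{n-1}$; this reduces the multivariable vanishing to exactly the hypothesis of Lemma \ref{not0} and sets up the induction on the remaining tensor factors.
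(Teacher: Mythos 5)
Your proof is correct and follows essentially the same route as the paper: equivariance by iterating $gY(u,z)g^{-1}=Y(gu,z)$, and injectivity by induction on $n$, peeling off the outermost vertex operator with linearly independent outer tensor factors and invoking Lemma \ref{not0}. The only difference is that you make explicit the coefficient extraction in $z_1,\dots,z_{n-1}$ needed to apply the single-variable Lemma \ref{not0}, a step the paper leaves implicit by applying that lemma directly with the $w^i$ taken to be formal series $\varphi_{n-1}(\alpha_i)$.
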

\begin{proof}
Since $G \leq \text{Aut}(V)$,  we have
\begin{align*}
\varphi_n(g (v^n \otimes \cdots \otimes v^1 \otimes v^0 ))&=Y(gv^n, z_n) \cdots Y(gv^1,z_1)gv^0\\
&=g(Y(v^n, z_n) \cdots Y(v^1,z_1)v^0)\\
&=g\varphi_n(v^n \otimes \cdots \otimes v^1 \otimes v^0 )
\end{align*}
for $g\in G.$  That is,  $\varphi_n$ is a $G$-homomorphism.

Now we prove that $\varphi_n$ is injective by induction on $n$ (see also \cite{Li1}).
Assume that $v^1 \otimes \alpha_1+ \cdots + v^s \otimes \alpha_s \in \text{Ker}(\varphi_n)$,
where $v^1, \cdots , v^s\in V$ are linearly independent and $\alpha_1, \cdots , \alpha_s \in V^{\otimes n}.$
Then $$Y(v^1,z_n)\varphi_{n-1}(\alpha_1)+ \cdots + Y(v^s, z_n) \varphi_{n-1}(\alpha_s)=0.$$
By Lemma \ref{not0}, we obtain $\varphi_{n-1}(\alpha_1)= \cdots = \varphi_{n-1}(\alpha_s)=0.$
By induction, we have $\alpha_1= \cdots = \alpha_s=0$. Hence $\text{Ker}(\varphi_n)=\{0\}$, as required.
\end{proof}

\begin{corollary} \label{appear-V}
Every irreducible $G$-module  appears in $V$.
\end{corollary}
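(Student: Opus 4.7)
The plan is to combine the faithful-representation result (Corollary \ref{faithful2}) with the injective $G$-homomorphism $\varphi_n$ from Lemma \ref{l5.1}, and then use the equivariant projections $P_J$ to peel off a single copy of $V$ from the formal power series.

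First I would note that $V$ is a faithful $G$-module by definition of $\text{Aut}(V)$, so Corollary \ref{faithful2} applies: any irreducible $G$-module $W$ embeds as a $G$-submodule of $V^{\otimes n}$ for some $n \geq 0$. The case $n=0$ is trivial because $V^{\otimes 0}=\C$ is the trivial representation, which already appears inside $V$ as the one-dimensional subspace $\C\mathbf{1}$ (since $g\mathbf{1}=\mathbf{1}$). So I reduce to $n \geq 1$ and fix an embedding $W \hookrightarrow V^{\otimes n}$.

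Next I would compose this embedding with $\varphi_{n-1}\colon V^{\otimes n} \to V[[z_1^{\pm 1},\dots,z_{n-1}^{\pm 1}]]$. By Lemma \ref{l5.1}, $\varphi_{n-1}$ is an injective $G$-homomorphism, so I obtain an injective $G$-homomorphism $\psi\colon W \hookrightarrow V[[z_1^{\pm 1},\dots,z_{n-1}^{\pm 1}]]$. Pick any nonzero $w\in W$; then $\psi(w)\neq 0$, so at least one of its coefficients is nonzero, i.e.\ there exists $J\in\Z^{n-1}$ with $P_J(\psi(w))\neq 0$. Since each $P_J$ is a $G$-homomorphism into $V$, the composition $P_J\circ\psi\colon W \to V$ is a nonzero $G$-homomorphism.

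Finally, since $W$ is irreducible, any nonzero $G$-homomorphism out of $W$ is injective. Therefore $P_J\circ\psi$ realizes $W$ as a $G$-submodule of $V$, completing the proof. There is no serious obstacle here; the only subtlety is recognising that tensor powers of $V$ can be transported into $V$ itself via iterated vertex operators and that the coefficient-extraction maps $P_J$ are $G$-equivariant, both of which are already supplied by Lemma \ref{l5.1} and the discussion preceding it.
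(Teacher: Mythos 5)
Your proposal is correct and follows essentially the same route as the paper: invoke Corollary \ref{faithful2} to embed $W$ in $V^{\otimes n}$, handle $n=0$ via $\C\mathbf{1}$, and then use the injectivity of $\varphi_{n-1}$ together with a coefficient projection $P_J$ to land a nonzero (hence injective, by irreducibility) $G$-homomorphism $W\to V$. The only difference is presentational: you make explicit the standard step that a nonzero homomorphism out of an irreducible module is injective, which the paper leaves implicit.
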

\begin{proof}
Let $W$ be any irreducible $G$-module.
By Corollary \ref{faithful2}, $W$ appears in $V^{\otimes n}$ for some $n \geq 0$.
If $n=0$, then $W$ is the trivial module. Then,   $W \cong \C {\bf 1} \leq V$ as $G$-modules.
If $n=1$, the proof is complete. Hence we can assume that $n \geq 2$.
Since $\varphi_{n-1}$ is injective, there exists some $J=(j_1, \cdots, j_{n-1}) \in \Z^{n-1}$ such that
$0 \neq P_J \varphi_{n-1}(W) \leq V.$ Hence $W \cong  P_J \varphi_{n-1}(W)$ appears in $V$. The proof is complete.
\end{proof}

\begin{remark}
When $V$ is a vertex operator algebra, Corollary \ref{appear-V} has been obtained in \cite{HMT, DLM0}.
\end{remark}


The following lemma is the key to prove the main Theorem \ref{schur1} in this section.

\begin{lemma} \label{f=ov1}
Let $X$ be a finite dimensional $\C G$-submodule of $V$ and let $f$ be any element of $\text{Hom}_G(X,V)$.
Then there exist $v^1, \cdots, v^n \in V^G$ and $i_1, \cdots, i_n \in \Z$ such that
$$f=v^1_{i_1}+ \cdots + v^n_{i_n}.$$
\end{lemma}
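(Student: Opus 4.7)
The plan is to combine three ingredients: Jacobson's density theorem, the single-mode reduction of Lemma \ref{ouov}, and a group-averaging argument that moves the coefficients into $V^G$.

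First, I would apply Jacobson's density theorem. By Lemma \ref{UV}(1), $V$ is a simple $\mathcal{U}(V)$-module, and it is of countable dimension by hypothesis. Viewing $f$ merely as a $\C$-linear map from the finite dimensional space $X$ into $V$, Corollary \ref{Jac1} produces an element $a \in \mathcal{U}(V)$ with $ax = f(x)$ for every $x \in X$. Next, since $a$ is a polynomial in mode operators $u^1_{n_1}\cdots u^k_{n_k}$ acting on the finite dimensional subspace $X$, Lemma \ref{ouov} rewrites each such product, restricted to $X$, as a sum of single modes. Therefore there exist $v^1,\ldots,v^n \in V$ and $i_1,\ldots,i_n \in \Z$ such that
$$f(x) = (v^1_{i_1} + \cdots + v^n_{i_n})x \qquad \text{for all } x \in X.$$
The coefficients $v^k$ produced this way, however, need not lie in $V^G$.

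The crucial step is to average over $G$. Set $\tilde v^k := \frac{1}{|G|}\sum_{g \in G} g v^k$; then $\tilde v^k \in V^G$. Since each $g \in G$ is an automorphism of $V$, the identity $g Y(u,z) g^{-1} = Y(gu,z)$ extracted mode by mode gives $g u_n g^{-1} = (gu)_n$ on $V$ for all $u \in V$ and $n \in \Z$. Using the $\C$-linearity of $v \mapsto v_n$, this yields
$$\sum_{k=1}^n \tilde v^k_{i_k} \;=\; \frac{1}{|G|}\sum_{g \in G}\sum_{k=1}^n (gv^k)_{i_k} \;=\; \frac{1}{|G|}\sum_{g \in G} g\bigl(v^1_{i_1} + \cdots + v^n_{i_n}\bigr)g^{-1}.$$
For any $x \in X$, the $G$-stability of $X$ and the $G$-equivariance of $f$ give $g\bigl(v^1_{i_1} + \cdots + v^n_{i_n}\bigr)g^{-1} x = g f(g^{-1}x) = f(x)$. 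Averaging over $g$ then yields $\sum_{k=1}^n \tilde v^k_{i_k}\, x = f(x)$ for all $x \in X$, which is exactly the required expression with coefficients in $V^G$.

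The main obstacle is the last move: Jacobson's density alone only supplies coefficients from $V$, not from $V^G$, and bridging that gap requires the averaging trick. This trick rests on two small but essential facts, namely the $\C$-linearity of $v \mapsto v_n$ (so that averaging coefficients corresponds to averaging the full operator by conjugation) and the automorphism identity $g v_n g^{-1} = (gv)_n$; both are immediate from the vertex algebra axioms and the definition of an automorphism, so no further technical input is needed.
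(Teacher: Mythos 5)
Your proposal is correct and follows essentially the same route as the paper: Corollary \ref{Jac1} applied to the simple countable-dimensional $\mathcal{U}(V)$-module $V$, reduction to a sum of single modes via Lemma \ref{ouov}, and then averaging the coefficients over $G$ using $gu_ng^{-1}=(gu)_n$ together with the $G$-equivariance of $f$. The only cosmetic difference is that the paper writes the equivariance step tersely as $f=gfg^{-1}$, whereas you spell out $gf(g^{-1}x)=f(x)$ pointwise; the content is identical.
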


\begin{proof}
Note that $V$ is a simple $\mathcal{U}(V)$-module.
Then, by Corollary \ref{Jac1} and Corollary \ref{ouov}, there exist some  $u^1, \cdots, u^n \in V$ and $i_1, \cdots, i_n \in \Z$ such that
$$f=u^1_{i_1}+ \cdots + u^n_{i_n} .$$
Since $f \in \text{Hom}_G(X,V)$, for any $g \in G$, we have
$$f=gfg^{-1}=gu^1_{i_1}g^{-1}+ \cdots + gu^n_{i_n}g^{-1}=(gu^1)_{i_1}+ \cdots + (gu^n)_{i_n}.$$
For $i=1, \cdots, n$, set $v^i= \frac{1}{|G|}\sum_{g \in G}gu^i \in V^G$. Now, we have
$$f= \frac{1}{|G|}\sum_{g \in G} ((gu^1)_{i_1}+ \cdots + (gu^n)_{i_n})=v^1_{i_1}+ \cdots + v^n_{i_n}.$$
This proves the theorem.
\end{proof}

As before let $\Lambda$ be the set of all irreducible  characters of $\C[G].$
For $\lambda \in \Lambda$,  we denote the corresponding irreducible representation by $W_{\lambda}$.

Let $V^{\lambda}$ be the sum of all  $\C [G]$-submodules of $V$  isomorphic to $W_{\lambda}$.
Then $V^{\lambda}$ is a $\C[G] \otimes V^G$-module.
Let $V_{\lambda}=\text{Hom}_G(W_{\lambda}, V)$ be the multiplicity space of $W_{\lambda}$ in $V$.
By the discussion in section 4, $V_\lambda$ is a  $V^G$-module and
hence $W_\lambda \otimes V_\lambda$ is a $\C[G] \otimes V^G$-module.
Moreover, as $\C[G] \otimes V^G$-modules, we have $V^{\lambda} \cong W_\lambda \otimes V_\lambda$.
Hence, $V$, as a $\C[G] \otimes V^G$-module,  has the following direct sum decomposition:
$$V = \bigoplus_{\lambda \in \Lambda} W_{\lambda} \otimes V_{\lambda}.$$
Note that if $\lambda$ is the trivial character, then $V^{\lambda}=V_{\lambda}=V^{G}.$

\begin{theorem} \label{schur1}
The $\C[G] \otimes V^G$-module decomposition:
$$V = \bigoplus_{\lambda \in \Lambda} W_{\lambda} \otimes V_{\lambda}$$
gives a dual pair  $(\C[G], V^G)$ on $V$:
\begin{enumerate}[{(1)}]

\item $V_{\lambda}$ is nonzero for any $\lambda \in \Lambda $.

\item Each $V_{\lambda}$  is an irreducible $V^G$-module.

\item $V_{\lambda}$ and $V_\mu$ are isomorphic $V^G$-modules if and only if $\lambda = \mu$.
\end{enumerate}
In particular, $V$ is a completely reducible $V^G$-module.
\end{theorem}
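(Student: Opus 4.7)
The plan is to observe that this theorem is essentially the combination of three previously established ingredients: Corollary \ref{appear-V} (every irreducible $G$-module appears in $V$), Lemma \ref{f=ov1} (any $G$-module homomorphism on a finite-dimensional $\C[G]$-submodule of $V$ can be realized by a sum of operators coming from $V^G$), and the general Theorem \ref{Dual-Th} that converts such a density statement into a dual-pair conclusion. So almost all the work is already done; I just need to verify that the hypotheses line up.

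For part (1), I would argue as follows. Given $\lambda \in \Lambda$, Corollary \ref{appear-V} says $W_\lambda$ embeds into $V$ as a $\C[G]$-submodule, and by the construction of $V_\lambda = \mathrm{Hom}_G(W_\lambda, V)$, this is precisely the statement $V_\lambda \neq 0$. So $\Lambda = \mathrm{Irr}_V(\C[G])$ in the notation of Theorem \ref{Dual-Th}, and parts (1) and (2) together will match the conclusion of that theorem.

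For parts (2) and (3), I would invoke Theorem \ref{Dual-Th} with $A = \C[G]$ and the vertex algebra being $V^G$, acting on the $A \otimes V^G$-module $M = V$. The only hypothesis to check is the density condition: for every finite-dimensional $\C[G]$-submodule $X$ of $V$ and every $f \in \mathrm{Hom}_G(X, V)$, there must exist $v^1,\dots,v^n \in V^G$ and $i_1,\dots,i_n \in \Z$ with $f = v^1_{i_1} + \cdots + v^n_{i_n}$. But this is exactly the statement of Lemma \ref{f=ov1}. So Theorem \ref{Dual-Th} delivers irreducibility of each $V_\lambda$ and the inequivalence of $V_\lambda, V_\mu$ for $\lambda \neq \mu$, which are parts (2) and (3).

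Finally, since $\Lambda$ is finite (as $G$ is finite) and each $W_\lambda$ is finite-dimensional, the decomposition $V = \bigoplus_{\lambda \in \Lambda} W_\lambda \otimes V_\lambda$ exhibits $V$ as a finite direct sum of irreducible $V^G$-modules, giving the final ``in particular'' assertion. There is no real obstacle here; the theorem is a bookkeeping assembly of previously proven results, and the only thing to be careful about is noting that the density hypothesis in Theorem \ref{Dual-Th} is stated for $V$-operators in general but Lemma \ref{f=ov1} upgrades this to operators from $V^G$, which is precisely what is needed to identify the commutant with $V^G$ rather than merely with $\mathcal{U}(V)$.
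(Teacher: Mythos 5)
Your proposal is correct and matches the paper's own proof exactly: part (1) is Corollary \ref{appear-V}, and parts (2) and (3) follow by applying Theorem \ref{Dual-Th} with the density hypothesis supplied by Lemma \ref{f=ov1}. Your extra care in checking that the hypotheses of Theorem \ref{Dual-Th} line up is welcome but reflects the same assembly the paper performs.
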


\begin{proof}

(1) is proved in Corollary \ref{appear-V}.  (2) and (3) follow form Lemma \ref{f=ov1} and Theorem \ref{Dual-Th}. \end{proof}

We remark that Theorem \ref{schur1} generalizes Theorem 2.4 of \cite{DLM0} from vertex operator algebra to vertex algebra.

Using Theorem \ref{schur1} we can give a Galois correspondence between subgroups of $G$ and subalgebras of $V$ containing $V^G,$ generalizing the Galois correspondence from vertex operator algebra \cite{DM1, HMT} to vertex algebra.
\begin{theorem} \label{Galois}
	Let $V$ be a simple vertex algebra of countable dimension and $G$ a finite automorphism group of $V$. Then $H\mapsto V^H$ gives a bijection between subgroups of $G$ and subalgebras of $V$ containing $V^G.$  In particular,  any subalgebra of $V$ containing $V^G$ is simple.
\end{theorem}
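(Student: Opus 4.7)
The plan is to verify the theorem in three stages: simplicity of each $V^H$, injectivity of the map $H \mapsto V^H$, and surjectivity onto subalgebras containing $V^G$. The first two stages follow cleanly from Theorem \ref{schur1} applied to arbitrary subgroups; the third is the main obstacle.

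For simplicity and the ``in particular'' claim, I apply Theorem \ref{schur1} with $G$ replaced by the subgroup $H$ (valid since $H$ is itself a finite automorphism group of $V$). This produces a $\C[H] \otimes V^H$-decomposition $V = \bigoplus_{\mu \in \text{Irr}(H)} W^H_\mu \otimes V^H_\mu$ whose trivial $H$-isotype is $V^H$ itself, now viewed as an irreducible module over $V^H$. Hence $V^H$ is a simple vertex algebra.

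For injectivity, assume $V^H = V^K$ and show $g \in H$ implies $g \in K$ (the symmetric argument gives the reverse inclusion). Such a $g$ fixes $V^H = V^K$ pointwise, so with $K' = \langle K, g \rangle$ one has $V^{K'} \subseteq V^K$ (as $K \leq K'$) and $V^K \subseteq V^{K'}$ (since both $K$ and $g$ fix $V^K$), giving $V^K = V^{K'}$. Apply Theorem \ref{schur1} to both $K$ and $K'$: the resulting isotypical decompositions of $V$ are both over the common algebra $V^K = V^{K'}$, and by uniqueness of isotypical decomposition (together with Corollary \ref{appear-V} ensuring every irreducible of $K$ and $K'$ appears) the multisets $\{\dim W^K_\mu\}$ and $\{\dim W^{K'}_\nu\}$ coincide. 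Summing squares yields $|K| = \sum (\dim W^K_\mu)^2 = \sum (\dim W^{K'}_\nu)^2 = |K'|$; since $K \leq K'$, this forces $K = K'$ and $g \in K$.

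For surjectivity, let $U$ be a subalgebra with $V^G \subseteq U \subseteq V$, and set $H = \{g \in G : gu = u \text{ for all } u \in U\}$, so trivially $U \subseteq V^H$. In the Schur--Weyl decomposition $V = \bigoplus_\lambda W_\lambda \otimes V_\lambda$ from Theorem \ref{schur1}, the fact that the $V_\lambda$ are pairwise inequivalent irreducible $V^G$-modules forces every $V^G$-submodule of $V$ to have the form $\bigoplus_\lambda N_\lambda \otimes V_\lambda$ with $N_\lambda \leq W_\lambda$. Hence $U = \bigoplus_\lambda N_\lambda \otimes V_\lambda$ and $V^H = \bigoplus_\lambda (W_\lambda)^H \otimes V_\lambda$, with $N_\lambda \subseteq (W_\lambda)^H$; the remaining goal is equality. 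The main technical obstacle lies here: one must use the vertex subalgebra structure on $U$ nontrivially, extracting from the vertex operation $Y$ the $G$-equivariant structural maps $W_\lambda \otimes W_\mu \to W_\nu$ and showing that the family $\{N_\lambda\}$ closed under all such maps (together with $\mathbf{1} \in N_1$) must be of the form $N_\lambda = (W_\lambda)^K$ for some uniform subgroup $K \leq G$, paralleling the classical Galois correspondence. Once this is established, $U = V^K$, and injectivity forces $K = H$.
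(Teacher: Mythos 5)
Your first two stages are sound. Simplicity of $V^H$ is exactly the paper's argument (apply Theorem \ref{schur1} with $H$ in place of $G$). Your injectivity argument is different from the paper's --- the paper simply invokes \cite[Lemma 3.2]{DM1} to produce a $\lambda$ with $W_\lambda^{H_1}\ne W_\lambda^{H_2}$, whereas you pass to $K'=\langle K,g\rangle$, use uniqueness of the isotypic decomposition of the semisimple $V^K$-module $V$ to match the multiplicity multisets, and conclude $|K|=|K'|$ from $\sum_\mu(\dim W_\mu)^2=|K|$. This is a legitimate, self-contained alternative, and it buys independence from the cited group-theoretic lemma at the cost of needing Corollary \ref{appear-V} for both $K$ and $K'$.

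The surjectivity stage, however, contains a genuine gap: you have only restated the problem. Reducing to ``$U=\bigoplus_\lambda N_\lambda\otimes V_\lambda$ with $N_\lambda\subseteq W_\lambda$, and the family $\{N_\lambda\}$ is closed under the $G$-equivariant structural maps, hence equals $\{W_\lambda^K\}$ for some subgroup $K$'' is precisely where all the work lies, and you prove neither half of it. The second half (closure under structural maps forces $N_\lambda=W_\lambda^K$) is the group-theoretic core of \cite{HMT, DM2}, which the paper explicitly imports. The first half --- that the subalgebra structure of $U$ actually produces this closure, i.e.\ that for every $G$-homomorphism $\pi\colon W\otimes W\to W$ one has $\pi(R\otimes R)\subseteq R$ where $R=\bigoplus_\lambda N_\lambda$ --- is Lemma \ref{la}, and it is the one step the paper identifies as \emph{failing} to carry over from the vertex operator algebra proofs. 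The paper's new argument embeds $W_\lambda\otimes W_\mu$ into $V[[z,z^{-1}]]$ via the injective $G$-map $\varphi_1(u\otimes v)=Y(u,z)v$ (Lemma \ref{l5.1}, which rests on Lemma \ref{not0}), extends $\pi\circ\varphi_1^{-1}$ to a $G$-map $\rho$ on $V[[z,z^{-1}]]$, and uses Schur's lemma on the components $W_\nu\otimes V_\nu[z,z^{-1}]$ to show $\rho(U[z,z^{-1}])\subseteq R_\nu$; since $\varphi_1(R_\lambda\otimes R_\mu)\subseteq U[[z,z^{-1}]]$ because $U$ is closed under the vertex operation, the conclusion follows. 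Without an argument of this kind, your proposal does not establish surjectivity.
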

\begin{proof}
From Theorem \ref{schur1} we know that for any $H<G,$ $V^H$ is a simple vertex algebra containing $V^G.$ Assume that $H_1,H_2<G$ be two different subgroups of $G.$ Then
$$V^{H_i}=\oplus_{\lambda\in \Lambda}W_{\lambda}^{H_i}\otimes V_{\lambda}$$
for $i=1,2.$ By \cite[Lemma 3.2]{DM1},  there exists
$\lambda\in\Lambda$ such that $W_{\lambda}^{H_1}\ne W_{\lambda}^{H_2}.$ So map $H\mapsto V^H$ is injective.

To prove the map $H\mapsto V^H$ is onto,  we may, and shall,  choose  each $W_{\lambda}$ to be a subspace of $V$ for $\lambda\in \Lambda.$  Let $U$ be a subalgebra of $V$ such that $V^G\subset U\subset V.$
We need to show that there exists $H<G$ such that $U=V^H.$
Note that $U$
 decomposes  of into $V^G$-modules:
 $$U=\oplus_{\lambda\in \Lambda}R_{\lambda}\otimes V_{\lambda}$$
where $R_{\lambda}\subset W_{\lambda}.$ We set
$$W=\bigoplus_{\lambda\in \Lambda}W_{\lambda}, \ \ R=\bigoplus_{\lambda\in \Lambda}R_{\lambda}$$
and consider $W$ as a $G$-submodule of $V.$ Of course $R\subset W.$
The proofs of surjection  given in \cite{HMT, DM2} for vertex operator algebra go through except Lemma \ref{la} below.  The proofs of this Lemma in \cite{HMT, DM2}  for vertex operator algebra are not valid for  vertex algebra.
\end{proof}

 \begin{lemma}\label{la}

Suppose that $\pi$ is a $G$-homomorphism from $W\otimes_{\C}W$ to $W.$ Then
	$\pi(R\otimes R)\subset R.$
\end{lemma}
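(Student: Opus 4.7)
The plan is to realize the $G$-homomorphism $\pi:W\otimes_{\C}W\to W$ explicitly as a sum of vertex operations whose output automatically lies in the subalgebra $U$, and then to invoke the identification $W\cap U=R$. For the latter: under the decomposition $V=\bigoplus_{\lambda\in\Lambda}W_\lambda\otimes V_\lambda$ each chosen copy $W_\lambda\subset V$ corresponds to $W_\lambda\otimes\C\mathbf{1}_\lambda$ for some nonzero $\mathbf{1}_\lambda\in V_\lambda$, while $U=\bigoplus_\lambda R_\lambda\otimes V_\lambda$. Intersecting the $\lambda$-components yields $(W_\lambda\otimes\C\mathbf{1}_\lambda)\cap(R_\lambda\otimes V_\lambda)=R_\lambda$, so $W\cap U=\bigoplus_\lambda R_\lambda=R$ and in particular $R\subseteq U$.

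Next I would bring in the vertex operators. For each $n\in\Z$ the map $\Psi_n:W\otimes W\to V$ defined by $w\otimes w'\mapsto w_n w'$ is a $G$-homomorphism (under the diagonal action on $W\otimes W$) whose image $Y_n\subseteq V$ is a finite-dimensional $G$-submodule. The injectivity argument from Lemma \ref{l5.1} (which rests on Lemma \ref{not0}) shows that the collective map $(\Psi_n)_{n\in\Z}:W\otimes W\to\prod_n V$ is injective; since $W\otimes W$ is finite dimensional, the truncation $\Psi_S:=(\Psi_n)_{n\in S}$ is already injective for some finite $S\subset\Z$. By complete reducibility of finite-group representations, $\pi$ then extends to a $G$-homomorphism $F=\sum_{n\in S}F_n:\bigoplus_{n\in S}V\to W$ with each $F_n:V\to W$ a $G$-hom and $F\circ\Psi_S=\pi$, so that $\pi(w\otimes w')=\sum_{n\in S}F_n(w_n w')$ for all $w,w'\in W$. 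Restricting each $F_n$ to the finite-dimensional $G$-submodule $Y_n$ of $V$ and applying Lemma \ref{f=ov1} produces $v^{j,n}\in V^G$ and integers $i_{j,n}$ such that $F_n|_{Y_n}=\sum_j v^{j,n}_{i_{j,n}}$, whence $\pi(w\otimes w')=\sum_{n,j}v^{j,n}_{i_{j,n}}(w_n w')$.

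Finally, for $w,w'\in R\subseteq U$, the subalgebra property of $U$ forces $w_n w'\in U$, and since each $v^{j,n}\in V^G\subseteq U$ a further application of $v^{j,n}_{i_{j,n}}$ keeps us in $U$; therefore $\pi(w\otimes w')\in U$. Combined with $\pi(w\otimes w')\in W$, we obtain $\pi(w\otimes w')\in W\cap U=R$, as required. The main obstacle, and the reason a new argument is needed beyond \cite{HMT, DM2} where a VOA grading was available, is the passage from the abstract $G$-homomorphism $\pi$ to the explicit vertex-operator expression above; this is achieved by combining the injectivity statement of Lemma \ref{l5.1} (to reduce to finitely many modes) with the density result Lemma \ref{f=ov1} (to rewrite each $F_n|_{Y_n}$ via $V^G$-operators).
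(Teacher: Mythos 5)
Your proof is correct, but it diverges from the paper's argument at the decisive step, so it is worth comparing the two. Both proofs start identically: push the abstract $G$-homomorphism $\pi$ through the injective mode map of Lemma \ref{l5.1} and extend it, by complete reducibility of $G$-modules, to a $G$-homomorphism defined on an ambient space built out of $V$. The paper keeps the single extension $\rho$ on all of $V[[z,z^{-1}]]$ and finishes with Schur's lemma on the $G$-isotypic decomposition: $\rho$ kills $W_\gamma\otimes V_\gamma[z,z^{-1}]$ for $\gamma\neq\nu$ and acts on each $W_\nu\otimes x$ as a scalar, so it carries $\rho(U[z,z^{-1}])$ into $R_\nu$ directly. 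You instead truncate to finitely many modes $n\in S$ (legitimate, since $W\otimes W$ is finite dimensional), apply the density result Lemma \ref{f=ov1} to each $F_n|_{Y_n}$ so that $\pi(w\otimes w')$ becomes an explicit finite sum $\sum_{n,j}v^{j,n}_{i_{j,n}}(w_nw')$ with $v^{j,n}\in V^G$, and conclude from the closure of the subalgebra $U$ under its own products and under $V^G\subset U$ that the value lies in $U$, hence in $W\cap U=R$. Your route leans on the machinery already developed in Section 5 rather than on a separate Schur's-lemma computation, at the cost of needing the extra identification $W\cap U=R$ for the chosen copies $W_\lambda\subset V$ — which you verify correctly, since each embedded copy of $W_\lambda$ in $W_\lambda\otimes V_\lambda$ has the form $W_\lambda\otimes\C x_\lambda$, so intersecting with $R_\lambda\otimes V_\lambda$ recovers $R_\lambda$. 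The paper's route avoids both the truncation and that identification by landing in $R_\nu$ directly. Both arguments are valid.
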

\begin{proof}

 It is sufficient to prove  that  for any nonzero $\pi: W_{\lambda}\otimes W_{\mu}\to W_{\nu}$
with $\lambda,\mu,\nu$ in $\Lambda,$ $\pi(R_{\lambda}\otimes R_{\mu})\subset R_{\nu}.$	

It is easy to verify that the  following  diagram of $\C[G]$-modules and $\C[G]$-homomorphisms is commutative:
$$\begin{array}[c]{ccc}
\text{Im}(\varphi_1) &  \stackrel{\varphi_1}{\leftarrow} W_\lambda \otimes W_\mu \stackrel{\pi}{\rightarrow} & W_\nu \\
\downarrow\scriptstyle{i} &&  \uparrow\scriptstyle{\rho} \\
V[[z,z^{-1}]] &  \stackrel{=}{\rightarrow} & V[[z,z^{-1}]]
\end{array}$$
where $i$ is inclusion and $\rho$ is some extension of $\pi \circ  \varphi_1^{-1}$ to $V[[z,z^{-1}]]$.

Note that as $\C[G]$-modules, $V[[z,z^{-1}]] \cong \bigoplus_{\lambda \in \Lambda}(W_\gamma \otimes V_{\gamma})[[z,z^{-1}]].$
Since $\rho$ is a $\C[G]$-homomorphism,  $\rho(W_\lambda \otimes V_\lambda[z,z^{-1}])=0$ for $\gamma \neq \nu$.
On the other hand, given an $x \in V_\nu[z,z^{-1}]$, for any $w \in W_\nu$, by Schur's lemma, we have $\rho (w \otimes x)=kw$ for some $k \in \C$.
Hence, we have
\begin{align*}
\rho(U[z,z^{-1}]) & =\rho(\bigoplus_{\lambda \in \Lambda}R_\lambda \otimes V_{\lambda}[z,z^{-1}]) \\
&=\rho(R_\nu \otimes V_{\nu}[z,z^{-1}]) \subseteq R_\nu.
\end{align*}
Note that $\varphi_1(R_\lambda \otimes R_\mu) \subseteq U[[z,z^{-1}]].$
Hence, $\pi(R_\lambda \otimes R_\nu)=\rho ( i ( \varphi_1(R_\lambda \otimes R_\nu))) \subseteq \rho(U[z,z^{-1}])\subseteq R_\nu,$ as required.
\end{proof}
\section{Duality II}

Let $V$ be a countable dimensional simple vertex algebra and let  $G$ be a finite automorphism group of $V$.
Let $Z(G)$ be the center of $G$. In this section and the following sections, we fix an element $\sigma$ of order $T$  in $ Z(G)$.
For $r=0, \cdots, T-1$,
let $ V^r=\{v\in V|\sigma v=e^{-2\pi ir/T}v\}$ be the eigenspace of $\sigma$ associated to the eigenvalue $e^{-2\pi ir/T}$. We will established a duality result for any irreducible $\sigma$-twisted $V$-module in this section.

Let $(M,Y_M(\ , z)$ be an irreducible  $\sigma$-twisted $V$-module.
For $g \in G$,  we define a new irreducible $\sigma$-twisted $V$-module $(M\cdot g, Y_{M\cdot g})$
such that $M \cdot g =M$
as vector space and
$Y_{M \cdot g}(v,z)=Y_M(gv,z)$
for $ v \in V$  \cite{DLM4, DRX}.

In this section, we assume that $M$ is $G$-stable in the sense that for any $g \in G$,
$M$ and $M\cdot g$ are isomorphic $\sigma$-twisted $V$-modules.
Then for $g \in G$ and $v \in V$, there is a linear map $\phi(g): M \rightarrow M$ such that
\begin{align} \label{eq6-1}
\phi(g)Y_M(v,z)\phi(g)^{-1}=Y_M(gv,z).
\end{align}
If $g=1$, we simply take  $\phi(g)=\text{Id}_M.$
The simplicity of $M$ together with Schur's lemma shows that for $g, h \in G$, there exits a nonzero $\alpha_M(g, h)$,
such that
 $$\phi(g)\phi(h)=\alpha_M(g,h)\phi(gh).$$
It is easy to verify that $g\rightarrow \phi(g)$  is a
projection representation of $G$ on $M$ and  $\alpha_M$ is the corresponding $2$-cocycle in $C^2(G, C^*)$ (see \cite{DY}).

Set
$$\C^{\alpha_M}[G]=\bigoplus_{g\in G}\C{\bar g}$$
and define
$${\bar g}{\bar h}=\alpha_M(g ,h){\overline{gh}}$$
for $g , h \in G.$
Then $\C^{\alpha_M}[G]$ is a finite dimensional semisimple associative algebra (see \cite{DY}).
Moreover, $M$ is a $\C^{\alpha_M}[G]$-module such that  ${\bar g}m=\phi(g)m$ for ${\bar g} \in  G$ and $m \in M$.
Since the actions of $G$ and the actions of $V^G$ on $M$ commute with each other, $M$ is a $\C^{\alpha_M}[G] \otimes V^G$-module.

Let $\Lambda_M$ be the set of irreducible characters of $\C^{\alpha_M}[G]$.
For $\lambda \in \Lambda_M$,  we denote the corresponding irreducible representation by $W_{\lambda}$. Let $M^{\lambda}$ be the sum of all  $\C^{\alpha_M}[G]$-submodules of $M$  isomorphic to $W_{\lambda}$.
Then $M^{\lambda}$ is a $\C^{\alpha_M}[G] \otimes V^G$-submodule of $M$.
Let $M_{\lambda}=\text{Hom}_{\C^{\alpha_M}[G]}(W_{\lambda}, M)$ be the multiplicity space of $W_{\lambda}$ in $M$.
Similar to the discussion in Section 4, $M_\lambda$ is a $V^G$-module
and hence $W_\lambda \otimes M_\lambda$ is a $\C^{\alpha_M}[G] \otimes V^G$-module.
Moreover, as $\C^{\alpha_M}[G] \otimes V^G$-modules, we have $M^{\lambda} \cong W_\lambda \otimes M_\lambda$.
Hence, $M$, as a $\C^{\alpha_M}[G] \otimes V^G$-module, has the following direct sum decomposition:
$$M = \bigoplus_{\lambda \in \Lambda_M} W_{\lambda} \otimes M_{\lambda}.$$

The next lemma is a twisted version of Lemma \ref{f=ov1}.
\begin{lemma} \label{f=ov2}
Let $X$ be a finite dimensional $\C^{\alpha_M}[G]$-submodule of $M$ and let $f$ be any element in $\text{Hom}_{\C^{\alpha_M}[G]}(X,M)$.
Then there exist $v^1, \cdots, v^n \in V^G$ and $i_1, \cdots, i_n \in \Z$ such that
$$f=v^1_{i_1}+ \cdots + v^n_{i_n}.$$
\end{lemma}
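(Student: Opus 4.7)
The plan is to parallel the proof of Lemma \ref{f=ov1}, with careful bookkeeping of the $\sigma$-grading which now enters because $M$ is $\sigma$-twisted. By Lemma \ref{UV}, $M$ is a simple $\mathcal{U}(V[\sigma])$-module of countable dimension, so Corollary \ref{Jac1} applied to $X$ and the underlying $\C$-linear map of $f$ produces some $a \in \mathcal{U}(V[\sigma])$ with $f(x) = ax$ for every $x \in X$. Writing $a$ as a sum of products of modes and applying Lemma \ref{ouov} repeatedly on $X$ (after decomposing the $V$-elements that appear into their $\sigma$-eigenspace components), I would rewrite this in the form
$$f = u^1_{n_1} + \cdots + u^t_{n_t} \quad \text{on } X,$$
where each $u^j \in V^{r_j}$ is $\sigma$-homogeneous and $n_j$ lies in the coset $-r_j/T + \Z$ so that $u^j_{n_j}$ is defined on $M$.

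Next I would exploit the $G$-equivariance. Because $\sigma \in Z(G)$, every $g \in G$ commutes with $\sigma$ and hence preserves each eigenspace $V^r$, so $gu^j \in V^{r_j}$ and the mode $(gu^j)_{n_j}$ remains well-defined. The relation $\phi(g)Y_M(v,z)\phi(g)^{-1}=Y_M(gv,z)$ gives $\phi(g)v_n\phi(g)^{-1}=(gv)_n$ for all $v$ and $n$. Since $X$ is $\C^{\alpha_M}[G]$-stable and $f$ is $\C^{\alpha_M}[G]$-linear, $\phi(g)f\phi(g)^{-1}=f$ on $X$; conjugating the displayed identity for $f$ by $\phi(g)$ yields, for every $g \in G$,
$$f = (gu^1)_{n_1}+\cdots+(gu^t)_{n_t} \quad \text{on } X.$$

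Finally I would average: setting $v^j = \frac{1}{|G|}\sum_{g\in G}gu^j \in V^{r_j} \cap V^G$ and summing the preceding identity over $g$ gives $f = v^1_{n_1}+\cdots+v^t_{n_t}$ on $X$. Since $\sigma \in G$, every $V^G$-element is $\sigma$-invariant and therefore lies in $V^0$, so $v^j = 0$ whenever $r_j \neq 0$. Discarding those vanishing summands leaves the desired presentation with $v^j \in V^G$ and $n_j \in \Z$. The main subtlety is this last vanishing step: centrality of $\sigma$ is indispensable because it makes the $\sigma$-grading on $V$ stable under $G$, which is what allows $gu^j$ to remain in $V^{r_j}$ throughout the averaging; without centrality the averaging would not land inside $V^G$ in the needed form.
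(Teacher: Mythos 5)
Your proposal is correct and follows essentially the same route as the paper: Jacobson's density theorem on the simple $\mathcal{U}(V[\sigma])$-module $M$ plus Lemma \ref{ouov} to write $f$ as a sum of single modes, conjugation by $\phi(g)$ via (\ref{eq6-1}), averaging over $G$, and then discarding the terms with non-integral mode index because the averaged vectors lie in $V^G\subseteq V^0$. Your extra bookkeeping with the $\sigma$-eigenspace decomposition is just a more explicit form of the paper's closing remark that $v^k_{i_k}=0$ when $i_k\notin\Z$.
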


\begin{proof}
Note that $M$ is a simple $\mathcal{U}(V[\sigma])$-module.
By Corollary \ref{Jac1} and Corollary \ref{ouov}, there exist some  $u^1, \cdots, u^n \in V$ and $i_1, \cdots, i_n \in \frac{1}{T}\Z$ such that
$$f=u^1_{i_1}+ \cdots + u^n_{i_n} .$$
Since $f \in \text{Hom}_{\C^{\alpha_M}[G]}(X,M)$,  for any $g \in G$, by (\ref{eq6-1}), we have
\begin{align} \label{eq6-2}
f & =\phi(g)f\phi(g)^{-1} \notag \\
&=\phi(g)u^1_{i_1}\phi(g)^{-1}+ \cdots + \phi(g)u^n_{i_n}\phi(g)^{-1} \notag \\
&=(gu^1)_{i_1}+ \cdots + (gu^n)_{i_n}.
\end{align}
For $i=1, \cdots, n$, set $v^i= \frac{1}{|G|}\sum_{g \in G}(gu^i) \in V^G$.
By (\ref{eq6-2}), we have
\begin{align*}
f & =\frac{1}{|G|}\sum_{g \in G} ((gu^1)_{i_1}+ \cdots + (gu^n)_{i_n})\\
&=v^1_{i_1}+ \cdots + v^n_{i_n}.
\end{align*}
Note that if  $i_k \notin \Z$ for some $k$ then $v^k_{i_k}=0$ as $v^k \in V^G.$
This proves the theorem.
\end{proof}
To proof Corollary  \ref{abc} below, we need some facts (see \cite{DY,K}).

\begin{enumerate}[{(1)}]

\item If $A$ is a $\C{G}$-module and $B$ is a $\C^{\alpha_M}[G]$-module,
then $A \otimes B$ is a $\C^{\alpha_M}[G]$-module under the action defined by  ${\bar g}(a \otimes b)={ga \otimes {\bar g}b}$
for $a \in A, b\in B$ and $g \in G$.

\item If $A$ is a $\C^{\alpha_M}[G]$-module,
then $A[[z^{\frac{1}{T}}, z^{-\frac{1}{T}}]]$ is also a $\C^{\alpha_M}[G]$-module under the action defined by
$${\bar g}(\sum_{i}a_iz^i)=\sum_{i}({\bar g}a_i)z^i \ \ \text{for} \ \ g \in G \ \ \text{and} \  \ a_i \in A.$$

\item For $n \in \frac{1}{T}\Z$, let $P_{n}$ be the projection from $M[[z^{\frac{1}{T}}, z^{-\frac{1}{T}}]]$ to $Mz^n \cong M$.
Note that $P_n$ is a $\C^{\alpha_M}[G]$-module homomorphism for any $n \in \frac{1}{T}\Z$.

\item Define a linear mapping $\varphi : V \otimes M \rightarrow M[[z^{\frac{1}{T}}, z^{-\frac{1}{T}}]]$
by $\varphi(v \otimes m)=Y_M(v, z)m$ for $v \in V$ and $m \in M$.
By (\ref{eq6-1}) and Lemma  \ref{not0}, $\varphi$ is an injective $\C^{\alpha_M}[G]$-module homomorphism.

\item If $A$ and $B$ are all  $\C^{\alpha_M}[G]$-module, let $A^*=\text{Hom}(A, \C)$.
Then $A^* \otimes B$ is  a $C[G]$-module under the action defined by $g(f \otimes b)=f{\bar g}^{-1} \otimes {\bar g}b$
for $f \in A^*, b \in B$ and $g \in G$.

\item If $S$ is a finite dimensional $\C[G]$-module, $X$ and $Y$ are finite dimensional $\C^{\alpha_M}[G]$-modules, then
\begin{align} \label{iso}
\text{Hom}_{\C[G]}(S, X^* \otimes Y) \cong \text{Hom}_{\C^{\alpha_M}[G]}(S \otimes X, Y).
\end{align}
\end{enumerate}

\begin{remark}
If $\alpha_M \equiv 1$, the proof of  isomorphism (\ref{iso}) above is standard and
follows from the fact that the category of finite dimensional $G$-modules  is rigid (see \cite{EGNO}). For general $\alpha_M $, the proof of (\ref{iso}) is similar.
\end{remark}

\begin{lemma} \label{appearM}
Let $S$ be any simple $\C^{\alpha_M}[G]$-submodule of $V \otimes M$. Then $S$ appears in $M$.
\end{lemma}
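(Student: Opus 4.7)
The plan is to combine the five listed facts in a straightforward way, using the injective intertwiner $\varphi$ from item (4) together with the coordinate projections from item (3).

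First, since $\varphi : V \otimes M \to M[[z^{1/T},z^{-1/T}]]$ is an injective $\C^{\alpha_M}[G]$-module homomorphism and $S$ is a nonzero simple submodule of $V \otimes M$, the image $\varphi(S)$ is a nonzero $\C^{\alpha_M}[G]$-submodule of $M[[z^{1/T},z^{-1/T}]]$ isomorphic to $S$. So it suffices to show that $\varphi(S)$ maps isomorphically into $M$ via some coordinate projection.

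Next, recall from item (2) that $M[[z^{1/T},z^{-1/T}]]$ carries a $\C^{\alpha_M}[G]$-action componentwise, so that as $\C^{\alpha_M}[G]$-modules it decomposes as
\[
M[[z^{1/T},z^{-1/T}]] \;=\; \prod_{n \in \frac{1}{T}\Z} M z^n,
\]
with each $Mz^n \cong M$. By item (3), each projection $P_n$ onto $Mz^n$ is a $\C^{\alpha_M}[G]$-homomorphism. Since $\varphi(S)$ is nonzero, there must exist some $n \in \frac{1}{T}\Z$ for which $P_n(\varphi(S)) \neq 0$. The composite $P_n \circ \varphi|_S : S \to M$ is then a nonzero $\C^{\alpha_M}[G]$-module homomorphism from the simple module $S$ to $M$, hence injective by Schur's lemma. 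This realizes $S$ as a $\C^{\alpha_M}[G]$-submodule of $M$, which is exactly the claim.

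The proof has no real obstacle since all the module-theoretic scaffolding has been set up in items (1)--(4) immediately preceding the lemma; the only point to be careful about is that the cocycle $\alpha_M$ governing the $G$-action on $M[[z^{1/T},z^{-1/T}]]$ matches the one on $M$ itself (which is built into item (2)), so that the projections $P_n$ are genuinely $\C^{\alpha_M}[G]$-linear and Schur's lemma applies to the simple module $S$ on both ends.
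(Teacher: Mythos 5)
Your argument is exactly the paper's: apply the injective $\C^{\alpha_M}[G]$-homomorphism $\varphi$ to $S$, pick an $n$ with $P_n\varphi(S)\neq 0$, and use simplicity of $S$ to conclude $S\cong P_n\varphi(S)\subseteq M$. Correct, and the same proof (the paper states it in two lines); your extra remarks about the componentwise action and Schur's lemma just make explicit what the paper leaves implicit.
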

\begin{proof}
Since $\varphi$ is  injective,  there exists some $n \in \frac{1}{T}\Z$ such that $0 \neq P_n \varphi(S) \leq M.$
Hence $S \cong P_n \varphi(S) $ is a $\C^{\alpha_M}[G]$-submodule of $M$, as required.
\end{proof}

If $A$ is an associative algebra, let $\text{Irr}(A)$ be the set of the isomorphism classes of simple $A$-modules.

\begin{lemma} \label{5.3}
Let $\Delta \subseteq \text{Irr}(\C^{\alpha_M}[G])$.
Assume that for any $S \in \text{Irr}(\C[G])$ and any $W \in \Delta$,
every irreducible $\C^{\alpha_M}[G]$-submodule of $S \otimes W$ belongs to $\Delta$.
Then $\Delta=\text{Irr}(\C^{\alpha_M}[G])$.
\end{lemma}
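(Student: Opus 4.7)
The plan is to show that any non-empty subset $\Delta$ of $\mathrm{Irr}(\C^{\alpha_M}[G])$ that is closed under tensoring with ordinary $G$-modules (in the sense of the hypothesis) must exhaust all irreducibles. The core idea is that the adjunction (\ref{iso}) allows us to produce, given any target irreducible $W_1 \in \mathrm{Irr}(\C^{\alpha_M}[G])$ and any seed $W_2 \in \Delta$, some $S \in \mathrm{Irr}(\C[G])$ with $W_1$ appearing as a constituent of $S \otimes W_2$; the hypothesis then forces $W_1 \in \Delta$.

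First I would fix an arbitrary $W_1 \in \mathrm{Irr}(\C^{\alpha_M}[G])$ and an element $W_2 \in \Delta$ (the set $\Delta$ is non-empty in all intended applications, e.g.\ the set of irreducible constituents of a nonzero $\sigma$-twisted $V$-module). The key observation is that $W_2^{*} \otimes W_1$ is naturally a $\C[G]$-module: the two cocycle factors $\alpha_M$ coming from $W_2^*$ and $W_1$ cancel, exactly as in item (5) of the list preceding this lemma. Since $W_1$ and $W_2$ are nonzero finite-dimensional, $W_2^{*} \otimes W_1$ is a nonzero finite-dimensional $\C[G]$-module, hence by semisimplicity it contains some irreducible $\C[G]$-submodule $S$.

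Next I would feed this $S$ into the adjunction isomorphism (\ref{iso}) with $X = W_2$ and $Y = W_1$:
\[
\mathrm{Hom}_{\C[G]}(S, W_2^{*} \otimes W_1) \;\cong\; \mathrm{Hom}_{\C^{\alpha_M}[G]}(S \otimes W_2, W_1).
\]
The left side is nonzero by construction, so the right side is nonzero as well. Irreducibility of $W_1$ forces any nonzero homomorphism $S \otimes W_2 \to W_1$ to be surjective, and semisimplicity of $\C^{\alpha_M}[G]$ then identifies $W_1$ with an irreducible direct summand of $S \otimes W_2$. By the hypothesis on $\Delta$, every irreducible constituent of $S \otimes W_2$ lies in $\Delta$, so $W_1 \in \Delta$.

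Since $W_1$ was arbitrary, this gives $\Delta = \mathrm{Irr}(\C^{\alpha_M}[G])$. The only real obstacle is remembering that $W_2^{*} \otimes W_1$ is genuinely an ordinary $\C[G]$-module rather than a projective representation, which is exactly the cocycle-cancellation explained in item (5); once this is in place, the adjunction (\ref{iso}) does all the work, and no further computation is needed.
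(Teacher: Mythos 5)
Your proof is correct and takes essentially the same route as the paper's: pick an irreducible $\C[G]$-constituent $S$ of $W_2^{*}\otimes W_1$, apply the adjunction (\ref{iso}) to get a nonzero map $S\otimes W_2\to W_1$, and use semisimplicity to conclude $W_1$ appears in $S\otimes W_2$. Your explicit remark that $\Delta$ must be non-empty (the statement is false for $\Delta=\emptyset$) is a point the paper leaves implicit but is handled in the application.
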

\begin{proof}
For any $Y \in \text{Irr}(\C^{\alpha_M}[G])$, $X \in \Delta$, we can take $S \in \text{Irr}(\C[G])$ such that
$$\text{Hom}_{\C[G]}(S, X^* \otimes Y) \neq 0.$$
By (\ref{iso}), we have
$$ \text{Hom}_{\C^{\alpha_M}[G]}(S \otimes X, Y) \cong \text{Hom}_{\C[G]}(S, X^* \otimes Y) \neq 0.$$
Since $\C^{\alpha_M}[G]$ is semisimple, $Y$ appears in $S \otimes X$.
Consequently, $Y \in \Delta$. The proof is complete.
\end{proof}

\begin{corollary} \label{abc}
Every simple $\C^{\alpha_M}[G]$-module appears in $M$.
\end{corollary}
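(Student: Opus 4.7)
The plan is to define $\Delta \subseteq \mathrm{Irr}(\C^{\alpha_M}[G])$ to be the set of (isomorphism classes of) simple $\C^{\alpha_M}[G]$-modules that appear as submodules of $M$, and then apply Lemma \ref{5.3} to conclude $\Delta = \mathrm{Irr}(\C^{\alpha_M}[G])$. Since $\C^{\alpha_M}[G]$ is semisimple and $M$ is nonzero, $\Delta$ is nonempty, so the only task is to verify the hypothesis of Lemma \ref{5.3}: for each $S \in \mathrm{Irr}(\C[G])$ and each $W \in \Delta$, every simple $\C^{\alpha_M}[G]$-submodule of $S \otimes W$ must again lie in $\Delta$.

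To verify this, I would combine Corollary \ref{appear-V} with Lemma \ref{appearM}. By Corollary \ref{appear-V}, the irreducible $\C[G]$-module $S$ embeds as a $\C[G]$-submodule of $V$. By the definition of $\Delta$, the irreducible $\C^{\alpha_M}[G]$-module $W$ embeds as a $\C^{\alpha_M}[G]$-submodule of $M$. Under the tensor-product $\C^{\alpha_M}[G]$-action of fact~(1) preceding Lemma \ref{appearM}, namely $\bar{g}(a\otimes b) = ga\otimes\bar{g}b$, these embeddings yield a $\C^{\alpha_M}[G]$-submodule $S \otimes W \hookrightarrow V \otimes M$. Consequently any simple $\C^{\alpha_M}[G]$-submodule $T$ of $S \otimes W$ is also a simple $\C^{\alpha_M}[G]$-submodule of $V\otimes M$, and Lemma \ref{appearM} then tells us $T$ appears in $M$, i.e. $T \in \Delta$.

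With the hypothesis of Lemma \ref{5.3} verified, we conclude $\Delta = \mathrm{Irr}(\C^{\alpha_M}[G])$, which is exactly the statement of the corollary. The only step that required any thought is checking that the simple submodule $T$ of $S\otimes W$ inherits its $\C^{\alpha_M}[G]$-structure consistently with the embedding into $V\otimes M$; this is automatic from fact~(1) since both tensor actions are defined by the same formula, so there is no real obstacle. The entire proof is a short two-line application of the two preparatory lemmas.
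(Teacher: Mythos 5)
Your proof is correct and follows essentially the same route as the paper's: define $\Delta$ as the set of simple $\C^{\alpha_M}[G]$-modules appearing in $M$, use Corollary \ref{appear-V} to embed $S\otimes W$ into $V\otimes M$, apply Lemma \ref{appearM} to put every simple constituent back into $\Delta$, and conclude by Lemma \ref{5.3}. Your explicit remarks that $\Delta$ is nonempty (needed for Lemma \ref{5.3} to have any content) and that the tensor action of fact (1) makes $S\otimes W$ a genuine $\C^{\alpha_M}[G]$-submodule of $V\otimes M$ are details the paper leaves implicit, but the argument is the same.
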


\begin{proof}
Let $\Delta \subseteq \text{Irr}(\C^{\alpha_M}[G])$ be the set of  the isomorphism classes of simple $\C^{\alpha_M}[G]$-modules which appear in $M$.
We claim that  $ \Delta = \text{Irr}(\C^{\alpha_M}[G])$.

Let $S \in \text{Irr}(\C[G])$ and $W \in \Delta .$
By Corollary \ref{appear-V}, $S \otimes W$ appears in $V \otimes M$.
Hence every irreducible submodule of $S \otimes W$ occurs in $M$ by Lemma \ref{appearM}.
Consequently, every irreducible submodule of $S \otimes W$ belongs to $\Delta$.
It follows form Lemma \ref{5.3} that $ \Delta = \text{Irr}(\C^{\alpha_M}[G])$, as required.
\end{proof}

\begin{remark}
When $V$ is a vertex operator algebra, $M$ is an irreducible admissible $V$-module,
Corollary \ref{abc}  have been obtained in \cite{DM1, DY}.
A vertex algebra $V$  might not be $\N$-graded,
the isomorphism  $\psi_s$ used in the proof of Theorem 5.4 of \cite{DY} does not work directly in the current situation.
\end{remark}

\begin{theorem} \label{schur2}
The $\C^{\alpha_M}[G] \otimes V^G$-module decomposition:
$$M = \bigoplus_{\lambda \in \Lambda_M} W_{\lambda} \otimes M_{\lambda}$$
gives a  dual pair $(\C^{\alpha_M}[G], V^G)$ on $M:$

\begin{enumerate}[{(1)}]

\item $M_{\lambda}$ is nonzero for any $\lambda \in \Lambda_M$.

\item Each $M_{\lambda}$  is an irreducible $V^G$-module.

\item $M_{\lambda}$ and $M_\mu$ are isomorphic $V^G$-modules if and only if $\lambda = \mu$.

\end{enumerate}

\end{theorem}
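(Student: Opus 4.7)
The plan is to assemble the three assertions of Theorem \ref{schur2} by combining results that have already been prepared, in direct analogy with the untwisted case Theorem \ref{schur1}. The three ingredients are: Corollary \ref{abc} (every simple $\C^{\alpha_M}[G]$-module appears in $M$), Lemma \ref{f=ov2} (every $\C^{\alpha_M}[G]$-homomorphism from a finite dimensional submodule $X\subset M$ to $M$ can be realized as a finite sum $v^1_{i_1}+\cdots+v^n_{i_n}$ with $v^j\in V^G$ and $i_j\in\Z$), and Theorem \ref{Dual-Th} (which converts such realizability into a dual pair).

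First I would dispatch part (1). By definition $\Lambda_M$ is the set of isomorphism classes of simple $\C^{\alpha_M}[G]$-modules. Corollary \ref{abc} says that every such simple module actually occurs as a submodule of $M$, so $M_\lambda=\text{Hom}_{\C^{\alpha_M}[G]}(W_\lambda,M)\ne 0$ for every $\lambda\in\Lambda_M$. In particular $\text{Irr}_M(\C^{\alpha_M}[G])=\Lambda_M$, so parts (2) and (3) as formulated in Definition \ref{def-dual} will follow once we verify them for all $\lambda\in\text{Irr}_M(\C^{\alpha_M}[G])$.

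For parts (2) and (3) I would apply Theorem \ref{Dual-Th} with $A=\C^{\alpha_M}[G]$ acting on $M$ (and $V$ there replaced by $V^G$). The hypothesis of that theorem is precisely: for every finite dimensional $A$-submodule $X$ of $M$ and every $f\in\text{Hom}_A(X,M)$, there exist $v^1,\ldots,v^n\in V^G$ and $i_1,\ldots,i_n\in\Z$ with $f=v^1_{i_1}+\cdots+v^n_{i_n}$. This is exactly the statement of Lemma \ref{f=ov2}. Therefore Theorem \ref{Dual-Th} yields directly that each $M_\lambda$ is an irreducible $V^G$-module (part (2)) and that $M_\lambda\cong M_\mu$ as $V^G$-modules if and only if $\lambda=\mu$ (part (3)).

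Since the genuine work has already been absorbed into Lemma \ref{f=ov2} (Jacobson density plus the $G$-averaging trick that keeps the integer-mode condition thanks to $\sigma$ being central, so that $v^k\in V^G$ forces $v^k_{i_k}=0$ unless $i_k\in\Z$) and into Corollary \ref{abc} (induction-restriction between $\C[G]$- and $\C^{\alpha_M}[G]$-modules, bootstrapped from the untwisted Corollary \ref{appear-V} via Lemma \ref{appearM}), there is no remaining obstacle—the proof of Theorem \ref{schur2} should reduce to a single short paragraph that cites these three results. The only thing to double-check is that the identification $\text{Irr}_M(\C^{\alpha_M}[G])=\Lambda_M$ coming from part (1) is what allows us to state the stronger conclusion ``$M_\lambda$ nonzero for all $\lambda\in\Lambda_M$'' rather than just ``for all $\lambda\in\text{Irr}_M(\C^{\alpha_M}[G])$'' as in the general Theorem \ref{Dual-Th}.
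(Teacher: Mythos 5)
Your proposal is correct and follows exactly the paper's own argument: part (1) is Corollary \ref{abc}, and parts (2)--(3) follow by feeding Lemma \ref{f=ov2} into Theorem \ref{Dual-Th} with $A=\C^{\alpha_M}[G]$. Your added observation that Corollary \ref{abc} identifies $\text{Irr}_M(\C^{\alpha_M}[G])$ with all of $\Lambda_M$, so that the conclusion of Theorem \ref{Dual-Th} applies to every $\lambda\in\Lambda_M$, is a correct and worthwhile clarification that the paper leaves implicit.
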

\begin{proof}
(1) is proved in Corollary \ref{abc}.  The proof of (2) and (3) follows form Lemma \ref{f=ov2} and Theorem \ref{Dual-Th}.
\end{proof}

In Theorem \ref{schur2},  if $G=<g>$ is a cyclic group of order $n$ generated by $g,$ we recover the following result  obtained in \cite{ALPY1} previously.
\begin{corollary}
Let $V$ be a vertex operator algebra and  $g$ be an automorphism of order $n$.
Assume that $M$ is an irreducible weak $V$-module such that $M \cdot g \cong M$.
Then  $M$ is the direct sum of $n$ inequivalent irreducible $V^{<g>}$-submodules.
\end{corollary}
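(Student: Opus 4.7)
The plan is to deduce this cyclic case directly from Theorem \ref{schur2} applied to $G=\langle g\rangle$. Since $\sigma=g$ is central in the abelian group $G$ and $M\cdot g\cong M$ by hypothesis, $M$ is $G$-stable, so all the setup of Section 6 applies: we get the projective representation $\phi:G\to GL(M)$ with $2$-cocycle $\alpha_M$, and the $\C^{\alpha_M}[G]\otimes V^G$-module decomposition
\[
M=\bigoplus_{\lambda\in\Lambda_M}W_\lambda\otimes M_\lambda
\]
from Theorem \ref{schur2}.

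The key observation is that when $G$ is cyclic, the twisted group algebra $\C^{\alpha_M}[G]$ is actually isomorphic to the ordinary group algebra $\C[G]$. To see this, I would rescale the projective representation: since $\phi(g)^n$ commutes with all $Y_M(v,z)$ (by iterating (\ref{eq6-1}) $n$ times and using $g^n=1$), simplicity of $M$ and Schur's lemma force $\phi(g)^n=c\cdot\mathrm{Id}_M$ for some nonzero $c\in\C$. Choosing an $n$-th root $c^{1/n}$ and replacing $\phi(g)$ by $c^{-1/n}\phi(g)$ gives an honest representation of $G$, so the cocycle $\alpha_M$ is a coboundary and $\C^{\alpha_M}[G]\cong\C[G]\cong\C^n$.

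Consequently $|\Lambda_M|=n$ and each $W_\lambda$ is one-dimensional. Substituting into the decomposition gives $M=\bigoplus_{\lambda\in\Lambda_M}M_\lambda$, a sum of $n$ summands; by parts (2) and (3) of Theorem \ref{schur2} these are irreducible and pairwise inequivalent $V^G$-modules, which is exactly the claim. The one step that requires any genuine thought is the cocycle-trivialization argument above; everything else is just unpacking Theorem \ref{schur2} in the cyclic case.
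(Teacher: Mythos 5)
Your proposal is correct and follows essentially the same route as the paper: both reduce to Theorem \ref{schur2} for $G=\langle g\rangle$ and observe that the twisted group algebra $\C^{\alpha_M}[G]$ is isomorphic to $\C[G]\cong\C^n$, hence has exactly $n$ one-dimensional simple modules. The only difference is cosmetic: the paper simply cites the fact that $H^2(\Z_n,\C^*)=\{1\}$, whereas you prove that instance of it inline via the Schur's-lemma rescaling $\phi(g)\mapsto c^{-1/n}\phi(g)$, which is the standard argument behind the cited fact.
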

\begin{proof}
Note that $G\cong\Z_n.$ It is well known that $H^2(\Z_n, \C^{*})=\{1\}$ where $\C^{*}$ is a trivial $\Z_n$-module (see  \cite{K}). Hence $C^{\alpha}[\Z_n] \cong \C[\Z_n]$ for any $2$-cocycle $\alpha$ in $C^2(G, \C^*)$ and  $C^{\alpha_M}[\Z_n]$ has exactly $n$ inequivalent one-dimensional simple modules.
The result now follows from Theorem \ref{schur2}.\end{proof}

\section{Duality III}

In this section we deal with a finite set of $\sigma$-twisted $V$-modules and extend the duality results in Section 6 further.

We are working on the setting of Section 6 of \cite{DY}.
Let $\cal S$ be a finite set of inequivalent irreducible $\sigma$-twisted $V$-modules. Assume that $\cal S$ is $G$-stable. Then for every $M \in {\cal S}$ and $h \in G$, there exists $N \in {\cal S}$ such that $N \cong M \cdot h$. So there is a linear isomorphism $\phi_{N}(h) : N\rightarrow M$ such that
\begin{align} \label{eq7-1}
\phi_{N}(h)Y_{N}(u,z)
=Y_{M}(hu,z)\phi_{N}(h)
\end{align}
for all $u\in V$.  This implies that $M,N$ are isomorphic as $V^G$-modules.
Since every $\sigma$-twisted module in $\cal S$ is irreducible, there exists $\alpha_{N}(g,h)\in {\mathbb C}^*$
such that
\[\phi_{M}(g)\phi_{N}(h)
=\alpha_{N}(g,h)\phi_{N}(gh).\]
Moreover, for $g,h,k\in G$  we have
\[\alpha_{N}(g,hk) \alpha_{N}(h,k) =
\alpha_{M}(g,h)\alpha_{N}(gh, k).\]
Then
 $${\cal A}_{\alpha}(G,{\cal S})={\mathbb C}[G] \otimes {\mathbb C}{\cal S}
 = \bigoplus_{g \in G, M \in {\cal S}}\C ( g \otimes e(M))$$
is an associative algebra with product
$$g \otimes e(M)\cdot h\otimes e(N)= \alpha_{N}(g,h)gh\otimes e(M \cdot h)e(N)$$
and the identity element
$\sum_{M \in {\cal S}}1\otimes e(M)$ \cite{DY}.

Let ${\cal M}=\bigoplus_{M \in {\cal S}}M$.
We define an action of ${\cal A}_{\alpha}(G,{\cal S})$ on ${\cal M}$ as follows:
for $M, N \in {\cal S},\ w\in N,\ g\in G$ we set
$$(g\otimes e(M)) w  =  \delta_{M, N}\phi_{N}(g)w$$
where $\phi_{N}(g): N\rightarrow N\cdot g^{-1}$.  Under this actions, ${\cal M}$ becomes an ${\cal A}_{\alpha}(G,{\cal S})$-module.
Moreover, by (\ref{eq7-1}), ${\cal M}$  is, in fact, an ${\cal A}_{\alpha}(G,{\cal S}) \otimes V^G$-module.

For $M \in {\cal S}$, let $G_{M}=\{g \in G \ |\ M\cdot g \cong M \}$ be the stabilizer of $M$.
Let ${\cal O}_{M}$ be the orbit of $M$ under the action of $G$.
Let $G=\cup_{j=1}^{k}G_{M}g_j$ be a right coset decomposition with $g_1=1$.
Then $G=\cup_{j=1}^{k}g_j^{-1}G_{M}$ is a left coset decomposition.
Following \cite{DY}, we define several subspaces of ${\cal A}_{\alpha}(G,{\cal S})$ by:
\[
\begin{array}{rcl}
	{\displaystyle S(M)} & = &
	{\displaystyle
		\mbox{Span}\{g\otimes e(M)\ |\ g\in G_{M}\}},\\
	{\displaystyle D(M)} & = &
	{\displaystyle \mbox{Span}\{g\otimes e(M)\ |\ g\in G\}}
	\ \mbox{  and} \\
	{\displaystyle D({\cal O}_{M}}) & = &
	{\displaystyle \mbox{Span}\{g\otimes
		e(M\cdot g_j)\ |\ j=1,\ldots,k, g\in G\}}.
\end{array}
\]
Decompose ${\cal S}$ into a disjoint union of orbits ${\cal S}=\cup_{j\in J}{\cal O}_j$.
Let $M^j$ be a representation of ${\cal O}_j.$
Then ${\cal O}_j=\{M^j \cdot g\ |\ g \in G\}$, and
${\cal A}_{\alpha}(G, {\cal S})=\bigoplus_{j\in J}D({\cal O}_{M^j})$.

The following theorem comes from \cite{DY}.  We give a new proof  for the completeness.

\begin{theorem}\label{AaGS}
With the same notation as above, we have

\begin{enumerate}[{(1)}]
	
\item $S(M)$ is a semisimple associative algebra isomorphic to ${\mathbb C}^{\alpha_{M}}[G_{M}]$,
        where ${\mathbb C}^{\alpha_{M}}[G_{M}]$ is the twisted
		group algebra with $2$-cocycle $\alpha_{M}$.
				
\item The functors  $W \mapsto \mbox{\rm Ind}^{D(M)}_{S(M)}W=D(M) \otimes_{S(M)}W$
and  $X \mapsto (1 \otimes e(M))X$ gives an equivalence between
the category of ${\mathbb C}^{\alpha_{M}}[G_{M}]$-modules and the category of $D({\cal O}_{M})$-modules.
In particular, $D({\cal O}_{M})$ is semisimple
and simple $D({\cal O}_{M})$-modules are precisely equal to
$\mbox{\rm Ind}^{D(M)}_{S(M)}W$
where $W$ ranges over the simple
${\mathbb C}^{\alpha_{M}}[G_{M}]$-modules.
		
\item ${\cal A}_{\alpha}(G, {\cal S}) =\bigoplus_{j\in J}D({\cal O}_{M^j})$ is a direct sum of algebras.
In particular, ${\cal A}_{\alpha}(G, {\cal S})$ is a semisimple associative algebra and
 simple ${\cal A}_{\alpha}(G, {\cal S})$-modules are precisely
$\mbox{\rm Ind}^{D(M^j)}_{S(M^j)}W$, where $W$ ranges over the simple
${\mathbb C}^{\alpha_{M^j}}[G_{M^j}]$-modules
and $j\in J$.
\end{enumerate}
\end{theorem}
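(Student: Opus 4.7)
The plan is to view $\mathcal{A}_\alpha(G,\mathcal{S})$ as built from twisted group algebras of orbit stabilizers through Morita induction, with the idempotents $1\otimes e(M)$ serving as corner projections. I normalize the cocycle throughout so that $\alpha_M(1,1)=1$, which by the cocycle identity forces $\alpha_M(g,1)=\alpha_M(1,g)=1$ for all $g\in G$.

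For part (1), I would compute the product $(g\otimes e(M))(h\otimes e(M))$ for $g,h\in G_M$ directly from the defining formula. Since $h\in G_M$ means $[M\cdot h]=M$, the factor $e(M\cdot h)e(M)$ collapses to $e(M)$, and the product becomes $\alpha_M(g,h)\,gh\otimes e(M)$. The linear bijection $g\otimes e(M)\mapsto\bar g$ is then an algebra isomorphism $S(M)\to\C^{\alpha_M}[G_M]$. Semisimplicity of the twisted group algebra of a finite group over $\C$ is the classical twisted analog of Maschke's theorem (see \cite{K}).

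The main content is part (2), which I would handle by a standard Morita equivalence through a full idempotent. Set $e_M=1\otimes e(M)\in D(\mathcal{O}_M)$. With the normalization above, $e_M^2=e_M$. A direct product computation gives $e_M D(\mathcal{O}_M)e_M=S(M)$: left multiplication by $e_M$ annihilates a basis element $g\otimes e(N)$ unless $[M\cdot g]=N$, and subsequent right multiplication by $e_M$ forces $N=M$, leaving only those $g\in G_M$. The crucial fullness condition $D(\mathcal{O}_M)e_M D(\mathcal{O}_M)=D(\mathcal{O}_M)$ I would verify by exhibiting, for each basis element $g\otimes e(N)$ with $N\in\mathcal{O}_M$, an explicit factorization: choose $h\in G$ with $[M\cdot h]=N$ and compute $(gh^{-1}\otimes e(M))\cdot e_M\cdot(h\otimes e(N))$, obtaining a nonzero scalar multiple of $g\otimes e(N)$. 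Standard Morita theory then delivers the inverse equivalence $W\mapsto D(\mathcal{O}_M)e_M\otimes_{S(M)}W=D(M)\otimes_{S(M)}W$ and $X\mapsto e_M X$, together with the fact that $D(\mathcal{O}_M)$ inherits semisimplicity from $S(M)$ and that simple $D(\mathcal{O}_M)$-modules correspond bijectively to simple $S(M)$-modules.

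For part (3), the vector space decomposition $\mathcal{A}_\alpha(G,\mathcal{S})=\bigoplus_{j\in J}D(\mathcal{O}_{M^j})$ is immediate from the partition $\mathcal{S}=\cup_j\mathcal{O}_j$. To promote it to a decomposition of algebras I would show that products vanish across distinct orbits: for $N\in\mathcal{O}_i$ and $N'\in\mathcal{O}_j$ with $i\neq j$, the product $(g\otimes e(N))(g'\otimes e(N'))$ contains the factor $e(N\cdot g')e(N')$, and since $[N\cdot g']$ lies in $\mathcal{O}_i$ while $N'\in\mathcal{O}_j$, this factor vanishes. Part (3) then reduces to part (2) applied to each summand. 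I expect the main obstacle to be the bookkeeping of the cocycle scalars during the idempotency and fullness verifications, since the product formula couples orbit labels with cocycle values in a way that needs to be unwound carefully; once those identities are in hand, the Morita machinery does the rest.
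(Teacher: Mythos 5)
Your proof is correct, but for the main part (2) you take a genuinely different route from the paper. You package the argument as Morita theory: you verify that $e_M=1\otimes e(M)$ is an idempotent with corner $e_M D(\mathcal{O}_M)e_M=S(M)$, check fullness $D(\mathcal{O}_M)e_M D(\mathcal{O}_M)=D(\mathcal{O}_M)$ via the explicit factorization $(gh^{-1}\otimes e(M))\,e_M\,(h\otimes e(N))$, and then invoke the standard corner-ring equivalence $X\mapsto e_M X$, $W\mapsto D(\mathcal{O}_M)e_M\otimes_{S(M)}W=D(M)\otimes_{S(M)}W$. All the cocycle bookkeeping you flag as the main risk does work out (the normalization $\alpha_N(g,1)=\alpha_N(1,h)=1$ is automatic from $\phi(1)=\mathrm{Id}$), and your cross-orbit vanishing argument for (3) matches the paper's implicit use of $e(N\cdot g')e(N')=0$. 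The paper instead proves (2) by hand: it decomposes an arbitrary $D(\mathcal{O}_M)$-module $X$ as $\bigoplus_{N\in\mathcal{O}_M}X_N$ with $X_N=(1\otimes e(N))X$, shows that $g\otimes e(M)$ carries simple $S(M)$-submodules of $X_M$ to simple $S(M\cdot g^{-1})$-submodules of $X_{M\cdot g^{-1}}$, and builds the simple $D(\mathcal{O}_M)$-modules as $D(\mathcal{O}_M)S_i=\bigoplus_j(g_j^{-1}\otimes e(M))S_i$, then checks the two functors are mutually inverse via an explicit map $\rho$. Your approach is shorter and more conceptual once the fullness of $e_M$ is established; the paper's approach is more self-contained and, importantly, its by-products (the explicit isomorphism $\rho$, the fact that $X$ is generated by $X_M$ and that $X_M=0$ iff $X=0$) are reused verbatim in the proof of Lemma \ref{123} (Claim 2). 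If you adopt the Morita route you would need to extract those concrete statements separately for the later arguments.
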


\begin{proof}
The proof of (1) is standard (see also \cite{DY,K}). (3) follows from  (2).

We now prove (2).  Let $X$ be any $D({\cal O}_{M})$-module. For $N \in {\cal O}_M$, let $X_N=(1 \otimes e(N))X$.
It is easy to see that $X_N$ is an $S(N)$-module for any $N \in {\cal O}_M$ and $X=\bigoplus_{N \in {\cal O}_M}X_N$.
We also note that if $S$ is an $S(M)$-submodule of $X_M$, then for any $g \in G$,
 $(g \otimes e(M))S$ is an $S(M \cdot g^{-1})$-submodule of $X_{M \cdot g^{-1}}$.
 Moreover, since $\phi_M(g): M \rightarrow M \cdot g^{-1}$ is a linear isomorphism,
 hence $S$ is a simple $S(M)$-module if and only if $(g \otimes e(M))S$ is a simple $S(M \cdot g^{-1})$-module.

Since $S(M)$ is semisimple,  write $X_M=\bigoplus_{i \in I}S_i$, where each $S_i$ is a simple $S(M)$-module.
For $i \in I$,  $D({\cal O}_{M})S_i$ is a $D({\cal O}_{M})$-submodule of $X$ generate by $S_i$.
Note that for any $g \in G $ and $ h \in G_M$, we have
$$(gh \otimes e(M))S_i= \alpha_M(g ,h)^{-1}(g \otimes e(M))(h \otimes e(M))S_i=(g \otimes e(M))S_i.$$
So
\begin{align*}
  D({\cal O}_{M})S_i&=\sum_{g \in G, M \in {\cal O}_M}(g \otimes e(M))S_i \\
&=\sum_{g \in G}(g \otimes e(M))S_i \\
&=\bigoplus_{j=1}^k (g_j^{-1} \otimes e(M))S_i
\end{align*}
Since for any $g \in G$, $(g \otimes e(M))S_i$ is a simple $S(M \cdot g^{-1})$-module,
we conclude  that $D({\cal O}_{M})S_i$ is a simple $D({\cal O}_M)$-module.

Since $\phi_M(g): M \rightarrow M \cdot g^{-1}$ is a linear isomorphism, for any $g \in G$,
$$X_{M \cdot g^{-1}}= \bigoplus_{i \in I}(g \otimes e(M))S_i.$$
 Thus  $X=\bigoplus_{i \in I}D({\cal O}_M)S_i$ and $D({\cal O}_M)$ is semisimple.
 We also conclude that $X$ is generate by $X_M$ and $X_M=0$ if and only if $X=0.$

To complete the proof of (2), we must show that (a)
 if $S$ is an $S(M)$-module, then $(1 \otimes e(M))(D(M) \otimes_{S(M)}S) \cong S,$
(b)
if $X$ is a $D({\cal O}_M)$-module, then $D(M) \otimes_{S(M)}X_M \cong X$.
(a) is clear. For (b) we define a linear mapping
$\rho: D(M) \otimes_{S(M)}X_M \rightarrow  X$ by $ \rho((g \otimes e(M)) \otimes x)= \phi_M(g)(x)$
for $g \in G $ and $x \in X_M$.
It is easy to see that $\rho$ is a  $D({\cal O}_M)$-module homomorphism.
 Since $X$ is generated by $X_M$, then $\rho$ is onto.
It is clear that $\rho: (D(M) \otimes_{S(M)}X_M)_M \cong X_M \rightarrow X_M$ is an isomorphism.
Hence $(\text{ker}(\rho))_M=0$. Consequently, $\text{ker}(\rho)=0$ and $\rho$ is an isomorphism. The proof is complete.
\end{proof}

Note  that if $W$ is a simple  ${\mathbb C}^{\alpha_{M}}[G_{M}]$-module,
then $\text{Hom}_{{\mathbb C}^{\alpha_{M}}[G_{M}]}(W, M)$ is a $V^{G_M}$-module.
In particular, it is a $V^G$-module. It is easy to see that $D(M) \otimes_{S(M)} M$ has a $V^G$-module structure by defining $v_n(x \otimes w)=x \otimes v_nw$
for  $ v \in V^G , n \in \Z$ and $x \otimes w \in D(M) \otimes_{S(M)} M.$
Clearly, the actions of $D({\cal O}_{M})$ and the actions of $V^G$ on $D(M) \otimes_{S(M)} M$ commute with each other.
Hence $D(M) \otimes_{S(M)} M$ is a $D({\cal O}_{M}) \otimes V^G$-module.
By the discussion in section 4, $$\text{Hom}_{D({\cal O}_{M})}(D(M) \otimes_{S(M)} W, D(M) \otimes_{S(M)} M)$$ is a $V^G$-module.

\begin{lemma} \label{123}
Let $W$ be a simple ${\mathbb C}^{\alpha_{M}}[G_{M}]$-module, then
$$\text{Hom}_{{\mathbb C}^{\alpha_{M}}[G_{M}]}(W, M) \ \ \text{and} \ \
 \text{Hom}_{{\cal A}_{\alpha}(G,{\cal S})}(D(M) \otimes _{S(M)} W, {\cal M})$$
are isomorphic $V^G$-modules.
\end{lemma}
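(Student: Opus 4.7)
The plan is to exhibit an explicit $V^G$-equivariant isomorphism whose underlying linear bijection is essentially the adjunction already built into the equivalence of categories in Theorem~\ref{AaGS}(2). Concretely, I would send $F\in \text{Hom}_{{\cal A}_\alpha(G,{\cal S})}(D(M)\otimes_{S(M)}W,{\cal M})$ to the map $\Psi(F)\in \text{Hom}_{S(M)}(W,M)$ defined by $\Psi(F)(w):=F((1\otimes e(M))\otimes w)$. Since $(1\otimes e(M))$ acts on ${\cal M}$ as the projection onto its $M$-summand, the image lies in $M$; the $S(M)$-equivariance of $\Psi(F)$ follows from the identity $(1\otimes e(M))\otimes (h w) = h\otimes w$ in $D(M)\otimes_{S(M)}W$ for $h\in S(M)$, combined with the ${\cal A}_\alpha(G,{\cal S})$-equivariance of $F$.

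For the inverse, given $f\in \text{Hom}_{S(M)}(W,M)$, I would define $\Phi(f): D(M)\otimes_{S(M)}W\to {\cal M}$ by $\Phi(f)((g\otimes e(M))\otimes w):=(g\otimes e(M))\cdot f(w)=\phi_M(g)f(w)\in {\cal M}$, which is well-defined on the balanced tensor product since $f$ is $S(M)$-linear. To promote this to a full ${\cal A}_\alpha(G,{\cal S})$-module map, I use the orthogonal decomposition ${\cal A}_\alpha(G,{\cal S})=\bigoplus_{j\in J} D({\cal O}_{M^j})$ from Theorem~\ref{AaGS}(3): the summands corresponding to orbits other than ${\cal O}_M$ annihilate $D(M)\otimes_{S(M)}W$ on the left and likewise annihilate the image $\Phi(f)(D(M)\otimes_{S(M)}W)\subset \bigoplus_{N\in {\cal O}_M}N$ on the right, so extension by zero is forced. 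That $\Psi$ and $\Phi$ are mutually inverse is then a direct unwinding of definitions and, equivalently, a formal consequence of the equivalence of categories in Theorem~\ref{AaGS}(2) applied to $X=\bigoplus_{N\in {\cal O}_M}N\subset {\cal M}$.

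Finally, for $V^G$-equivariance: each $v_n$ with $v\in V^G$ and $n\in\mathbb{Z}$ commutes with every $\phi_N(g)$ by (\ref{eq6-1}) and (\ref{eq7-1}), hence commutes with the entire ${\cal A}_\alpha(G,{\cal S})$-action on ${\cal M}$. Consequently both Hom spaces carry the natural $V^G$-module structure $(v_n\cdot h)(x):=v_n\,h(x)$, and since $\Psi$ is given by evaluation at a fixed element, the computation
\[
\Psi(v_n\cdot F)(w) = v_n F((1\otimes e(M))\otimes w) = v_n\,\Psi(F)(w) = (v_n\cdot \Psi(F))(w)
\]
is immediate. The main obstacle I anticipate is purely organizational in the second step: one must carefully track how the orbit decomposition of ${\cal A}_\alpha(G,{\cal S})$, the idempotent projection $1\otimes e(M)$, and the fact that $\phi_M(g)$ typically sends $M$ into a \emph{different} summand $M\cdot g^{-1}$ of ${\cal M}$ fit together consistently. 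Once these compatibilities are verified, both the bijection and the $V^G$-linearity fall out at once.
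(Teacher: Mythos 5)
Your proposal is correct, but it packages the argument differently from the paper. The paper proves the lemma in three steps: first it uses the fully faithful induction functor from Theorem \ref{AaGS}(2) to get a $V^G$-isomorphism
$\text{Hom}_{\C^{\alpha_M}[G_M]}(W,M)\cong \text{Hom}_{D({\cal O}_M)}(D(M)\otimes_{S(M)}W,\,D(M)\otimes_{S(M)}M)$ via $f\mapsto \mathrm{Id}\otimes f$; then it identifies $D(M)\otimes_{S(M)}M$ with $\bigoplus_{X\in{\cal O}_M}X$ as a $V^G$-module via $(g\otimes e(M))\otimes x\mapsto \phi_M(g)(x)$; finally it observes that $\text{Hom}$ from $D(M)\otimes_{S(M)}W$ into the other orbit blocks of ${\cal M}$ vanishes. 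You instead write down the Frobenius-reciprocity adjunction directly, with unit $\Psi(F)(w)=F((1\otimes e(M))\otimes w)$ and counit $\Phi(f)((g\otimes e(M))\otimes w)=(g\otimes e(M))\cdot f(w)$, bypassing the intermediate object $D(M)\otimes_{S(M)}M$ altogether; the vanishing on other orbit blocks is absorbed into your observation that those blocks annihilate both the induced module and its image. The two routes rest on the same inputs (Theorem \ref{AaGS} and the commutation of $V^G$ with the $\phi$-operators), but yours is more self-contained and makes the $V^G$-equivariance a one-line evaluation computation, whereas the paper's factorization through $D(M)\otimes_{S(M)}M\cong\bigoplus_{X\in{\cal O}_M}X$ reuses Claim 2 elsewhere (it is also needed in the proof of Theorem \ref{schur3} to match the decomposition of ${\cal M}$). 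One small point to make explicit in your write-up: the well-definedness of $\Psi$ and $\Phi$ and the identities $\Psi\circ\Phi=\mathrm{Id}$, $\Phi\circ\Psi=\mathrm{Id}$ use the normalization $\alpha_M(1,g)=\alpha_M(g,1)=1$, which holds because $\phi(1)=\mathrm{Id}$.
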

\begin{proof}
The proof is divided into several steps.

Claim 1:
$$  \text{Hom}_{\C^{\alpha_{M}}[G_{M}]}(W, M) \ \ \text{and} \ \
 \text{Hom}_{D({\cal O}_{M})}(D(M) \otimes_{S(M)} W, D(M) \otimes_{S(M)} M)$$
 are isomorphic $V^G$-modules.

Define a mapping $$ \theta:  \text{Hom}_{\C^{\alpha_{M}}[G_{M}]}(W, M)
\rightarrow \text{Hom}_{D({\cal O}_{M})}(D(M) \otimes_{S(M)} W, D(M) \otimes_{S(M)} M)$$
by $\theta(f)= \text{Id} \otimes f$ for $f \in \text{Hom}_{\C^{\alpha_{M}}[G_{M}]}(W, M)$.
Since the functor $D(M) \otimes _{S(M)}(-)$ is  an equivalence of categories by Theorem \ref{AaGS}, $\theta$ is a linear isomorphic.
Moreover, for any $v \in V^G, n \in \Z$ and $x \otimes w \in D(M) \otimes_{S(M)} W$, we have
\begin{align*}
&(v_n \theta(f))(x \otimes w)=(v_n(\text{Id} \otimes f))(x \otimes w))\\
&= v_n (( \text{Id} \otimes f)(x \otimes  w))=v_n(x \otimes f(w))=  x \otimes v_nf(w);
\end{align*}
and
$$\theta(v_n f)(x \otimes w)=(\text{Id} \otimes v_nf)(x \otimes w)=x \otimes (v_nf)(w)=x \otimes v_nf(w).$$
Hence $\theta$ is a $V^G$-module isomorphism, as required.

We can also see the claim directly from Theorem \ref{schur2} with $G$ replaced by $G_M.$
We can assume that $W=W_\lambda$ for some $\lambda\in \Lambda_M.$ Then $  \text{Hom}_{\C^{\alpha_{M}}[G_{M}]}(W, M)=M_{\lambda}$ is an irreducible $V^{G_M}$-module and $M=\oplus_{\mu\in \Lambda_M}W_{\mu}\otimes M_{\mu}.$ So
$$D(M) \otimes_{S(M)} M=\oplus_{\mu \in \Lambda_M}(D(M) \otimes_{S(M)} W_{\mu})\otimes M_{\mu}.$$
  From Theorem \ref{AaGS} we know that $D(M) \otimes_{S(M)} W_{\mu}$ are inequivalent simple $D({\cal O}_M)$-modules, the claim follows.

Claim 2:   $D(M) \otimes_{S(M)} M$ and $\bigoplus_{X \in {\cal O}_M} X$ are isomorphic $V^G$-modules .

Define a mapping
$$\rho: D(M) \otimes_{S(M)} M \rightarrow  \bigoplus_{X \in {\cal O}_M} X$$
by
$$\rho((g \otimes e(M)) \otimes x)=\phi_M(g)(x)$$
for $g \in G, x \in M$.
By the proof of Theorem \ref{AaGS} (2), $\rho$ is an ${\cal A}_{\alpha}(G,{\cal S})$-module isomorphism (see also \cite[Proposition 6.2]{DY}).
Since $v_n \phi_M(g)=\phi_M(g) v_n$ for any $g \in G, v \in V^G$ and $n \in \Z$, we see that $\rho$ is a $V^G$-module isomorphism, as required.

Claim 3:
$$\text{Hom}_{{\mathbb C}^{\alpha_{M}}[G_{M}]}(W, M) \ \ \text{and} \ \
\text{Hom}_{{\cal A}_{\alpha}(G,{\cal S})}(D(M) \otimes _{S(M)} W, {\cal M})$$
are isomorphic $V^G$-modules.

Note that if $N \notin {\cal O}_M$, by Theorem \ref{AaGS} (3) and Claim 2, we have
\begin{align*}
\text{Hom}_{{\cal A}_{\alpha}(G,{\cal S})}(D(M) \otimes_{S(M)} W, \bigoplus_{X \in {\cal O}_{N}}X)
&\cong \text{Hom}_{{\cal A}_{\alpha}(G,{\cal S})}(D(M) \otimes_{S(M)} W, D(N) \otimes_{S(N)} N)\\
&=0.
\end{align*}
Hence, as $V^G$-modules, we have
\begin{align*}
& \text{Hom}_{{\mathbb C}^{\alpha_{M}}[G_{M}]}(W, M)\\
&\cong \text{Hom}_{D({\cal O}_{M})}(D(M) \otimes_{S(M)} W, D(M) \otimes_{S(M)} M) \\
&\cong \text{Hom}_{D({\cal O}_{M})}(D(M) \otimes_{S(M)} W, \bigoplus_{X \in {\cal O}_M}X)\\
& \cong \text{Hom}_{{\cal A}_{\alpha}(G,{\cal S})}(D(M) \otimes _{S(M)} W, {\cal M}),
\end{align*}
as expected.
\end{proof}

\begin{lemma} \label{f=ov3}
Let $X$ be a finite dimensional ${\cal A}_{\alpha}(G, {\cal S})$-submodule of ${\cal M}$ and let $f$ be any element of
$\text{Hom}_{{\cal A}_{\alpha}(G, {\cal S})}(X,{\cal M})$.
Then there exist $v^1, \cdots, v^n \in V^G$ and $i_1, \cdots, i_n \in \Z$ such that
$$f=v^1_{i_1}+ \cdots + v^n_{i_n}.$$
\end{lemma}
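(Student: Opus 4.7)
The plan is to follow the template of Lemma~\ref{f=ov2}, but using Corollary~\ref{Jac2} (Jacobson's density II) in place of Corollary~\ref{Jac1} to handle the several inequivalent simple summands of $\mathcal M$ simultaneously, and then averaging over $G$ to land in $V^G$.

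First, I would decompose $X$ using the orthogonal idempotents $1\otimes e(M)$, $M\in\mathcal S$, in ${\cal A}_{\alpha}(G,\mathcal S)$. Since $f$ is ${\cal A}_{\alpha}(G,\mathcal S)$-linear, it commutes with these idempotents, so writing $X_M:=(1\otimes e(M))X\subseteq M$ gives a finite-dimensional decomposition $X=\bigoplus_{M\in\mathcal S}X_M$ together with linear maps $f_M:=f|_{X_M}\colon X_M\to M$. By Lemma~\ref{UV}, each $M\in\mathcal S$ is a simple $\mathcal U(V[\sigma])$-module of countable dimension, and distinct members of $\mathcal S$ are inequivalent as $\mathcal U(V[\sigma])$-modules. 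Corollary~\ref{Jac2} then produces a single element $a\in\mathcal U(V[\sigma])$ whose action on each $X_M$ equals $f_M$, hence $a|_X=f$.

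Next I would apply Lemma~\ref{ouov} to the finite-dimensional subspace $X$ of the $\sigma$-twisted $V$-module $\mathcal M=\bigoplus_{M\in\mathcal S}M$ in order to rewrite the action of $a$ on $X$ as a finite sum $u^1_{i_1}+\cdots+u^n_{i_n}$ with $u^k\in V$ and $i_k\in \tfrac{1}{T}\Z$. Thus $f=u^1_{i_1}+\cdots+u^n_{i_n}$ on $X$.

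The last step is the $G$-averaging. Let $\Phi(g):=\sum_{M\in\mathcal S}g\otimes e(M)$ denote the collective action of $g\in G$ on $\mathcal M$ coming from ${\cal A}_{\alpha}(G,\mathcal S)$; this is a linear automorphism of $\mathcal M$ that permutes the summands according to the $G$-action on $\mathcal S$. Because $f$ is ${\cal A}_{\alpha}(G,\mathcal S)$-linear, $f=\Phi(g)f\Phi(g)^{-1}$ on $X$ for every $g\in G$. The key identity $\Phi(g)u_n\Phi(g)^{-1}=(gu)_n$ as operators on $\mathcal M$ follows by applying (\ref{eq7-1}) summand-by-summand. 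Thus $f=(gu^1)_{i_1}+\cdots+(gu^n)_{i_n}$ on $X$ for every $g\in G$, and setting $v^k:=\tfrac{1}{|G|}\sum_{g\in G}gu^k\in V^G$ yields $f=v^1_{i_1}+\cdots+v^n_{i_n}$ on $X$. Since each $v^k\in V^G\subseteq V^0$, we have $v^k_{i_k}=0$ whenever $i_k\notin\Z$, so discarding those terms gives the required form.

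The main technical point will be establishing the commutation $\Phi(g)u_n\Phi(g)^{-1}=(gu)_n$ on all of $\mathcal M$. Because $\Phi(g)$ does not preserve each summand individually but instead moves $N$ to the representative of $N\cdot g^{-1}$ in $\mathcal S$ via $\phi_N(g)$, this identity has to be checked componentwise from (\ref{eq7-1}); once available, the remaining steps are routine extensions of Lemma~\ref{f=ov2}.
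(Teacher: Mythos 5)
Your proposal is correct and follows essentially the same route as the paper's proof: decompose $X$ via the idempotents $1\otimes e(M)$, apply Jacobson's density theorem II together with Lemma \ref{ouov} to the inequivalent simple $\mathcal{U}(V[\sigma])$-summands of $\mathcal M$, and then conjugate by the $G$-action (the paper does this summand-by-summand via $\phi_{M\cdot g}(g)$, which is the componentwise form of your global operator $\Phi(g)$) before averaging into $V^G$ and discarding the non-integral modes.
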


\begin{proof}
Let $X_M= (1 \otimes e(M))X$ for $M\in {\cal S}.$ Then $X_M \subseteq X \cap M$ and $X= \bigoplus_{M \in {\cal S}}X_M.$ For any $x\in X$ we see that $f((1 \otimes e(M))x)=(1 \otimes e(M))f(x)\in M.$ That is, $f(X_M) \subseteq M$ for any $M \in {\cal S}$.
Consequently, $$f \in \bigoplus_{M \in {\cal S}} \text{Hom}(X_M, M).$$

Note that ${\cal M}$ is a direct sum of  inequivalent irreducible $\mathcal{U}(V[\sigma])$-modules of  countable dimensions.
By Corollary \ref{Jac2} and Corollary \ref{ouov}, there exist some  $u^1, \cdots, u^n \in V$ and $i_1, \cdots, i_n \in \frac{1}{T}\Z$ such that
$$f=u^1_{i_1}+ \cdots + u^n_{i_n} .$$
Since $f \in \text{Hom}_{{\cal A}_{\alpha}(G, {\cal S})}(X,{\cal M})$,
for any $g \in G$ and any $M \in {\cal S}$, we have $(g \otimes e(M \cdot g))f=f(g \otimes e(M \cdot g))$.
Consequently,  $\phi_{M \cdot g}(g)f= f\phi_{M \cdot g}(g)$  for any $g \in G$ and $M \in {\cal S}$.
 For any $g \in G$ and $M \in {\cal S}$,  restricting $f$ to $M$, by (\ref{eq7-1}),  we see that
\begin{align*}
&f=\phi_{M\cdot g}(g) f \phi_{M\cdot g}(g)^{-1}
=\phi_{M\cdot g}(g)u^1_{i_1}\phi_{M\cdot g}(g)^{-1}+ \cdots +\phi_{M \cdot g}(g) u^n_{i_n}\phi_{M \cdot g}(g)^{-1} \\
&=(gu^1)_{i_1}+ \cdots + (gu^n)_{i_n}.
\end{align*}
For $i=1, \cdots, n$, set $v^i= \frac{1}{|G|}\sum_{g \in G}(gu^i) \in V^G$. Now, we have
$$f= \frac{1}{|G|}\sum_{g \in G} ((gu^1)_{i_1}+ \cdots + (gu^n)_{i_n})=v^1_{i_1}+ \cdots + v^n_{i_n}.$$
If  $i_k \notin \Z$ for some $k$, since $v^k \in V^G$, we have $v^k_{i_k}=0$.
Hence, we can assume that $i_k \in \Z$ for all $k$, as required.
\end{proof}

Recall that ${\cal S}=\cup_{j\in J}{\cal O}_j$ and $M^j$ is a representation of ${\cal O}_j.$
For convenience, we let $G_j=G_{M^j},$ $\Lambda_j$ be the set of irreducible characters
of ${\mathbb C}^{\alpha_{M^j}}[G_{M^j}],$ $W_{j, \lambda}$ be the irreducible ${\mathbb C}^{\alpha_{M^j}}[G_{M^j}]$-module with character $\lambda\in \Lambda_j$, $M_{\lambda}^j$ be the multiplicity space of $W_{j,\lambda}$ in $M$
and
$W^{j}_{\lambda}= \mbox{Ind}^{D(M^j)}_{S(M^j)}W_{j, \lambda}.$ Then
$M^j$
has the following decomposition:
\[M^j=\bigoplus_{\lambda\in \Lambda_j}
W_{j,\lambda}\otimes M^j_{\lambda}\]
as ${\mathbb C}^{\alpha_{M^j}}[G_{j}] \otimes V^{G_{j}}$-module and each $ M^j_{\lambda}$ is an irreducible $V^{G_j}$-module by Theorem \ref{schur2}. By the discussion above and Lemma \ref{123}, $\cal M$
has the following decomposition
\[{\cal M}=\bigoplus_{j\in J,\lambda\in\Lambda_j}
W^{j}_{\lambda}\otimes M^j_{\lambda}\]
as ${\cal A}_{\alpha}(G, {\cal S})\otimes V^G$-module.

Finally, we have
\begin{theorem}\label{t5.6}\label{schur3}
The  ${\cal A}_{\alpha}(G,{\cal S})\otimes V^G$-module decomposition
	\[{\cal M}=\bigoplus_{j\in J,\lambda\in\Lambda_j}
	W^{j}_{\lambda}\otimes M^j_{\lambda}\]
gives a  dual pair $({\cal A}_{\alpha}(G,{\cal S}), V^G)$ on ${\cal M}:$
\begin{enumerate}[{(1)}]
\item $M^{j}_{\lambda}$ is nonzero for any $j\in J$ and $ \lambda\in\Lambda_j$.

\item  Each $M^{j}_{\lambda}$ is an irreducible $V^G$-module.
		
\item $M^{j_1}_{\lambda_1}$ and $M^{j_2}_{\lambda_2}$ are isomorphic
$V^G$-modules if and only if $j_1=j_2$ and $\lambda_1=\lambda_2$.
\end{enumerate}	
In particular, all irreducible $\sigma$-twisted $V$ module is a direct sum of finitely many
irreducible $V^G$-modules.
\end{theorem}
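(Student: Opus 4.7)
The plan is to derive parts (2) and (3) as a direct application of the general duality criterion Theorem \ref{Dual-Th}, with the associative algebra taken to be ${\cal A}_{\alpha}(G,{\cal S})$ and the vertex algebra taken to be $V^G$. The sole hypothesis of that theorem---that every ${\cal A}_{\alpha}(G,{\cal S})$-homomorphism from a finite-dimensional ${\cal A}_{\alpha}(G,{\cal S})$-submodule of ${\cal M}$ into ${\cal M}$ can be realized by a sum of modes $v^1_{i_1}+\cdots+v^n_{i_n}$ with $v^k\in V^G$ and $i_k\in\Z$---is precisely the content of Lemma \ref{f=ov3}. Feeding this into Theorem \ref{Dual-Th} immediately yields that each nonzero multiplicity space $M^j_\lambda$ is an irreducible $V^G$-module, and that the multiplicity spaces attached to distinct pairs $(j,\lambda)$ are inequivalent $V^G$-modules.

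Part (1), the nonvanishing of every $M^j_\lambda$, will be reduced to the previous section. By definition of the stabilizer, $M^j$ is $G_j$-stable, where $G_j = G_{M^j}$, so Theorem \ref{schur2} applies to $M^j$ with the group $G_j$ in place of $G$. This guarantees that for each $\lambda\in\Lambda_j$ the space $\text{Hom}_{{\mathbb C}^{\alpha_{M^j}}[G_j]}(W_{j,\lambda}, M^j)$ is a nonzero irreducible $V^{G_j}$-module. Lemma \ref{123} then identifies this space, as a $V^G$-module, with $\text{Hom}_{{\cal A}_{\alpha}(G,{\cal S})}(W^j_\lambda, {\cal M})$, which is precisely $M^j_\lambda$ under the Section 4 conventions, so $M^j_\lambda\neq 0$.

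The only real bookkeeping concern is the compatibility between the $V^G$-module structure on $M^j_\lambda$ produced by the generic multiplicity-space construction of Section 4 applied inside ${\cal M}$, and the $V^G$-module structure on $\text{Hom}_{{\mathbb C}^{\alpha_{M^j}}[G_j]}(W_{j,\lambda}, M^j)$ produced by that same construction applied inside $M^j$. This compatibility is exactly what Lemma \ref{123} verifies, via the induction-restriction equivalence of Theorem \ref{AaGS}, so no additional obstacle arises. Finally, the ``In particular'' assertion follows at once: any irreducible $\sigma$-twisted $V$-module $N$ may be placed inside a $G$-stable set ${\cal S}$ by taking ${\cal S}$ to be the $G$-orbit of $N$, so $N$ sits as a $V^G$-direct-summand of ${\cal M}$; and ${\cal M}$ itself is a direct sum of the finitely many summands $W^j_\lambda\otimes M^j_\lambda$, each $W^j_\lambda$ being finite-dimensional and each $M^j_\lambda$ irreducible, which forces $N$ to be a finite direct sum of irreducible $V^G$-modules.
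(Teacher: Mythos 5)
Your proposal is correct and follows essentially the same route as the paper: parts (2) and (3) are obtained by feeding Lemma \ref{f=ov3} into Theorem \ref{Dual-Th}, and part (1) is reduced to Theorem \ref{schur2} applied to $M^j$ with the stabilizer $G_j$, with Lemma \ref{123} supplying the identification of multiplicity spaces. The concluding remark about finitely many irreducible summands matches the paper's intent as well.
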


\begin{proof}
(1) follows from Theorem \ref{schur2} (2) immediately.  (2) and (3) follow form Lemma \ref{f=ov3} and Theorem \ref{Dual-Th}.
\end{proof}

The following Corollary recovers a recent result given in  \cite{ALPY2}.
\begin{corollary}
Let $M$ be an irreducible  $\sigma$-twisted $V$-module.
Then $M$ is an irreducible $V^G$-module if and only if $G_M=\{1\}.$
\end{corollary}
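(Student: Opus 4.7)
The plan is to reduce the statement to a multiplicity computation via Theorem \ref{schur3} applied with $\cal S = \cal O_M$, the $G$-orbit of $M$. Since $\cal O_M$ is automatically $G$-stable and consists of pairwise inequivalent irreducible $\sigma$-twisted $V$-modules, Theorem \ref{schur3} yields
\[
\cal M = \bigoplus_{X\in\cal O_M} X = \bigoplus_{\lambda\in\Lambda_M} W^M_\lambda \otimes M_\lambda,
\]
where the $M_\lambda$ are pairwise non-isomorphic irreducible $V^G$-modules and $W^M_\lambda = \mathrm{Ind}^{D(M)}_{S(M)} W_{M,\lambda}$ has dimension $[G:G_M]\cdot\dim W_{M,\lambda}$.

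Next I would extract the $V^G$-structure of $M$ itself from that of $\cal M$. The cleanest route is to apply the idempotent $1\otimes e(M)\in\cal A_\alpha(G,\cal S)$, which commutes with the $V^G$-action and projects $\cal M$ onto $M$; using the identity $(1\otimes e(M))\cdot D(M)=S(M)$ one obtains
\[
M = \bigoplus_{\lambda\in\Lambda_M} W_{M,\lambda}\otimes M_\lambda \cong \bigoplus_{\lambda\in\Lambda_M} M_\lambda^{\oplus\dim W_{M,\lambda}}
\]
as a $V^G$-module with pairwise inequivalent irreducible summands. An alternative route: each $X\in\cal O_M$ is isomorphic to $M$ as a $V^G$-module (as noted just before (\ref{eq7-1})), so $\cal M\cong M^{\oplus [G:G_M]}$, and matching this against the decomposition above by uniqueness of multiplicities for semisimple modules with non-isomorphic simple summands gives the same conclusion.

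Finally, from this decomposition $M$ is irreducible as a $V^G$-module if and only if $|\Lambda_M|=1$ and $\dim W_{M,\lambda_0}=1$ for the unique character $\lambda_0$. By the Wedderburn decomposition of the semisimple algebra $\mathbb{C}^{\alpha_M}[G_M]$ one has $|G_M|=\sum_{\lambda\in\Lambda_M}(\dim W_{M,\lambda})^2$, so this happens precisely when $|G_M|=1$, i.e.\ $G_M=\{1\}$. Once Theorem \ref{schur3} is invoked the proof is essentially bookkeeping; the only subtle point is checking that the projection by $1\otimes e(M)$ preserves the $V^G$-decomposition, which is immediate because $1\otimes e(M)$ commutes with $V^G$.
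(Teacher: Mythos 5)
Your proof is correct and follows the same route as the paper: take $\mathcal S$ to be the $G$-orbit of $M$ and apply Theorem \ref{schur3}, so that irreducibility of $M$ over $V^G$ reduces to $\mathbb{C}^{\alpha_M}[G_M]\cong\mathbb{C}$, i.e.\ $G_M=\{1\}$. The paper states this reduction in one line, whereas you make explicit the intermediate step (extracting $M=\bigoplus_\lambda W_{M,\lambda}\otimes M_\lambda$ from the decomposition of $\mathcal M$ via the idempotent $1\otimes e(M)$ and the Wedderburn count $|G_M|=\sum_\lambda(\dim W_{M,\lambda})^2$), which is a welcome clarification but not a different argument.
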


\begin{proof}
Let ${\cal S}= \{M\cdot g \ | \ g \in G \}.$  Note that ${\cal A}_{\alpha}(G,{\cal S})=D({\cal O}_M)$. It follows from Theorem  \ref{schur3} that $M$ is irreducible $V^G$-module if and only if $\C^{\alpha_M}[G_M] \cong \C $ which is clearly equivalent to that $G_M=\{1\}.$
\end{proof}

\end{document}